\newtheorem{theorem}{Theorem}[section]
\newtheorem{lemma}[theorem]{Lemma}
\newtheorem{proposition}[theorem]{Proposition}
\newtheorem{corollary}[theorem]{Corollary}
\theoremstyle{definition}
\newtheorem{definition}[theorem]{Definition}
\theoremstyle{remark}
\numberwithin{equation}{section}
\begin{document}

\title{Topology in cyber research}

\author{Steve Huntsman}
\address{BAE Systems FAST Labs}

\author{Jimmy Palladino}
\address{American University}

\author{Michael Robinson}
\address{American University}

%
%
%

\begin{abstract}
We give an idiosyncratic overview of applications of topology to cyber research, spanning the analysis of variables/assignments and control flow in computer programs, a brief sketch of topological data analysis in one dimension, and the use of sheaves to analyze wireless networks. 

The text is from a chapter in the forthcoming book \emph{Mathematics in Cyber Research} to be published by Taylor \& Francis.
\end{abstract}

\maketitle

Basic topological notions of connectivity are at the center of the cyber domain. Although graph/network theory addresses many problems relating to connectivity and global or qualitative structure in computer science and cybersecurity using techniques that trace their lineage to Euler (Figure \ref{fig:Konigsberg}), we sketch several ways in which distinctly modern topological approaches can help.  Taking connectivity as the base case, topological methods provide finer invariants that are useful for addressing more complex cyber problems. We review various relevant topological constructions, focusing on discrete structures that are naturally suited for addressing cyber-oriented problems.

\begin{figure}[htb]
\includegraphics[trim = 0mm 0mm 0mm 0mm, clip, keepaspectratio, width=.49\textwidth]{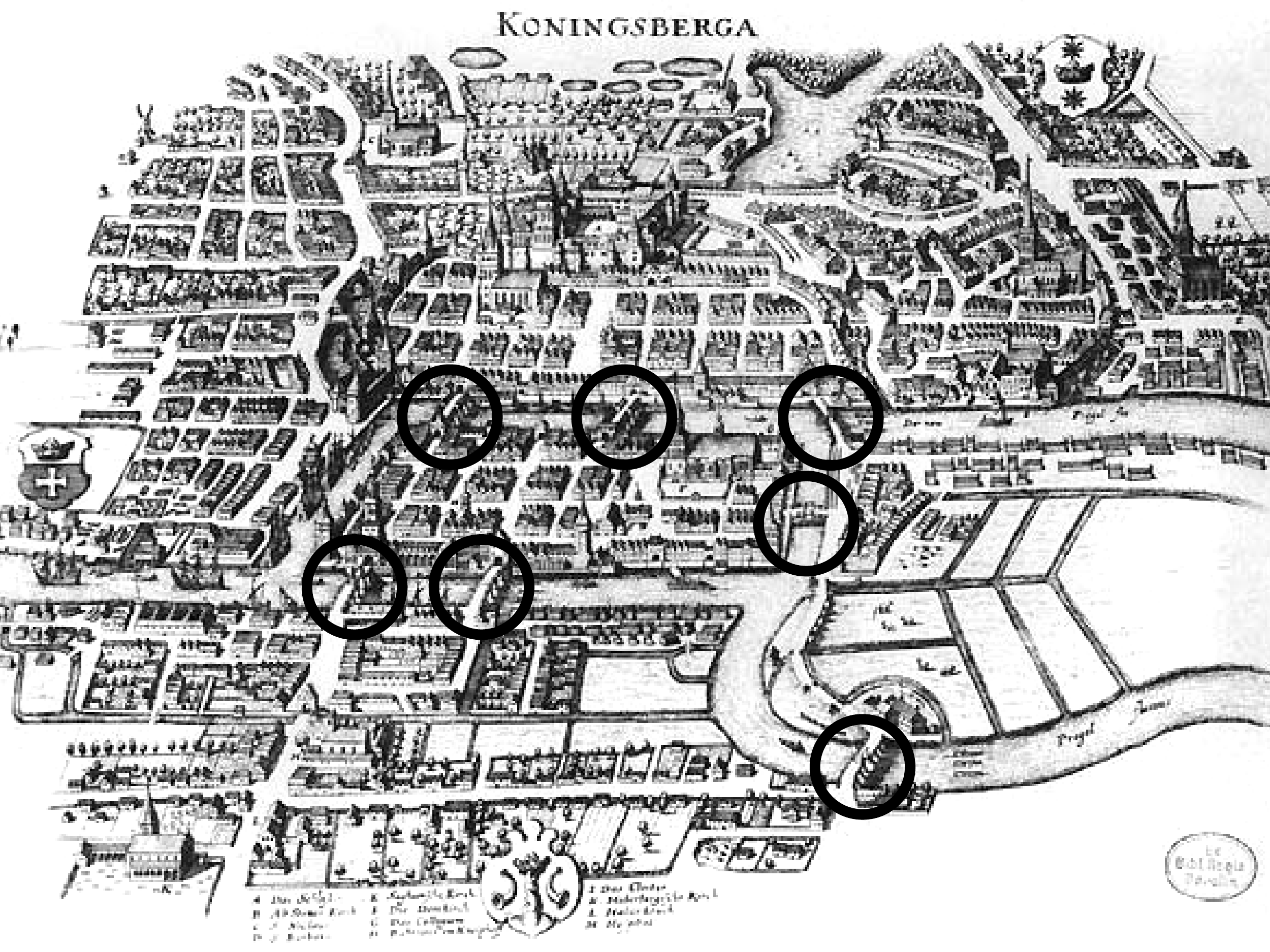}
\includegraphics[trim = 0mm 0mm 0mm 0mm, clip, keepaspectratio, width=.49\textwidth]{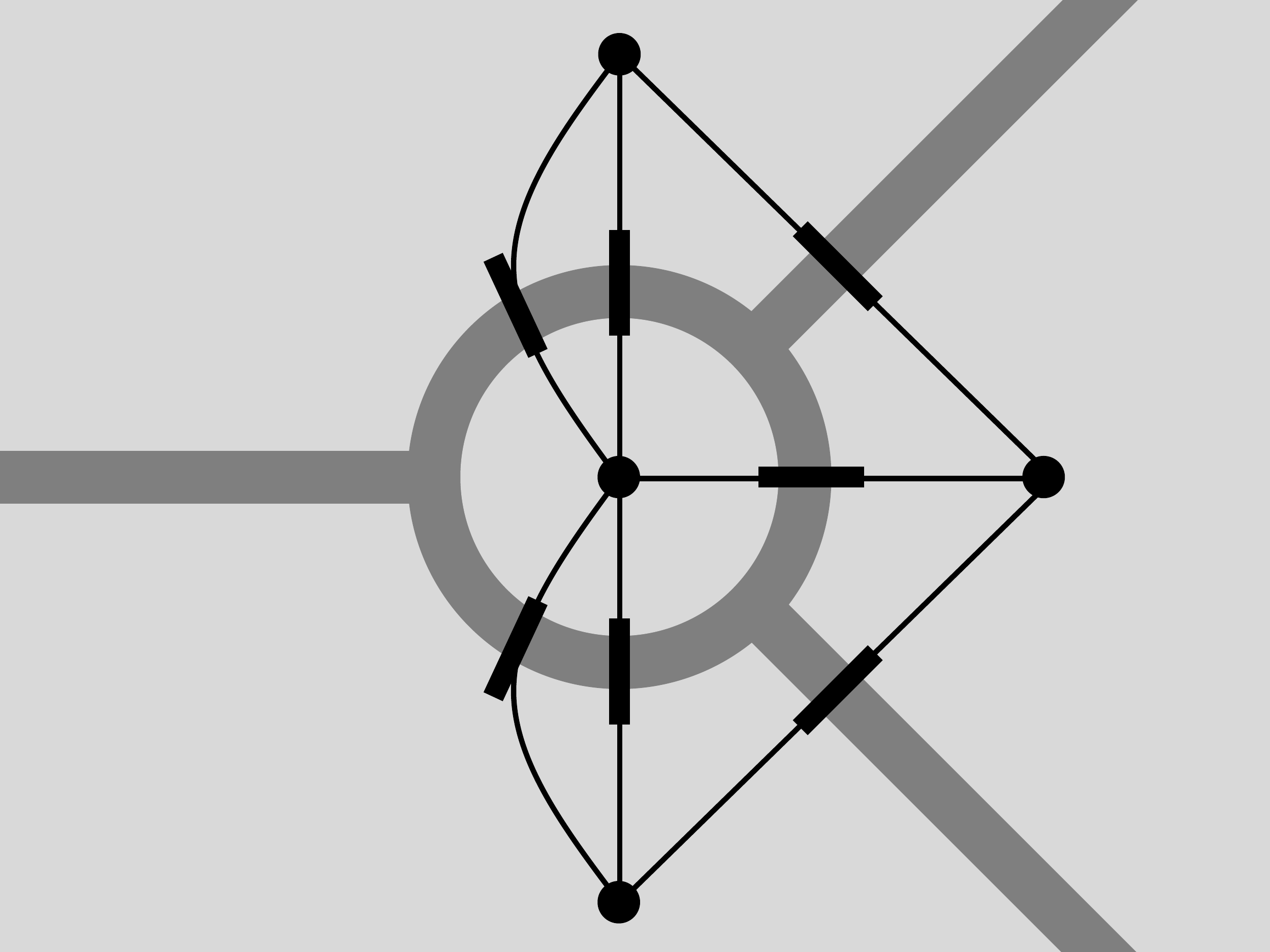}
\caption[The seven bridges of K\"onigsberg]{(L) The seven bridges of K\"onigsberg, indicated by circles. (R) A graphical representation of the bridges. Euler asked (and answered) the foundational problem in both graph theory and topology: is there a round trip that crosses every bridge exactly once? The answer is \emph{no}, because no orientation of the edges can give the same in- and out-degree to vertices with odd degree. The topological insight of Euler was that a combinatorial structure can faithfully represent connectivity properties of continuous bodies.}
\label{fig:Konigsberg}
\end{figure}

The chapter is organized in four parts that respectively treat simplicial homology (\S \ref{Dowker}), the recent and related theory of path homology (\S \ref{Path}), topological data analysis (\S \ref{TDA1D}), and sheaf theory (\S \ref{Sheaves}). Simplicial homology is the most conceptually and computationally ubiquitous algebraic invariant of a reasonably generic space, and we apply it to the analysis of computer code by considering special simplicial complexes that encode relations between program assignments and variables, and that do not even have to be explicitly formed in order to obtain useful invariants. Meanwhile, path homology is an important and quite new theory that defines high-dimensional topological invariants of directed graphs and as such is very promising for cyber-oriented applications such as the analysis of control flow. Our treatment of topological data analysis is very brief and restricted to one dimension, where it is possible to introduce and exploit the morals of topological persistence to the useful end of statistical mixture estimation without invoking the algebraic machinery of persistent homology. Finally, our treatment of sheaves is largely self-contained and developed in service of detecting critical nodes in wireless networks. 

Throughout this chapter, our focus on intrinsically discrete structures, realistic applications, and space constraints entail a somewhat idiosyncratic treatment. For example, the word ``functor'' and its variants do not occur outside this sentence, though we point out the functoriality of simplicial homology without invoking the formalism of category theory.

\section{Dowker homology to analyze complexity of source and binary code}\label{Dowker}

In this section, we introduce a class of data structures called abstract simplicial complexes that model interactions of arbitrary order, generalizing graphs, which model interactions of order two. We illustrate how these data structures can model well-behaved shapes and compute the basic topological invariant of homology by transporting these structures into the realm of linear algebra. Finally, we demonstrate how these ideas can characterize source and binary code. The same ideas could be applied to bipartite structures such as interactions between processes and files, clients and servers, etc. 

\subsection{\label{sec:Simplicial}Simplicial complexes and their homology}

Although topology is generally thought of as the study of spaces under continuous transformations, its intellectual roots are in combinatorial models of spaces.  While these combinatorial models are typically discarded once the theory is developed, they are ideally suited for describing cyber applications.  Abstract simplicial complexes are among the easiest of these combinatorial models to define and apply.

\begin{definition}
An \emph{abstract simplicial complex} \index{abstract simplicial complex}\index{simplicial!complex} is a family $\Delta$ of finite subsets (called \emph{simplices}) of a set $V = \{v_0, \dots, v_p\}$ of \emph{vertices} such that if $X \in \Delta$ and $\emptyset \ne Y \subseteq X$, then $Y \in \Delta$.
\footnote{
In other words, an abstract simplicial complex is a hypergraph with all sub-hyperedges. 
}  Usually, we write simplices with square brackets $[v_0, \dots, v_p]$.  The \emph{dimension} of a simplex $[v_0, \dots, v_p]$ is $p$, which is one less than its cardinality as a set.  A simplex that is the subset of no other simplex is called a \emph{facet}\index{facet}.
\end{definition}

When describing the local structure of a simplicial complex, it is often useful to delineate which simplices are subsets of each other.  If $a$ and $b$ are simplices of a simplicial complex $X$ and $a \subseteq b$, we say that ``$a$ is a \emph{face}\index{face} of $b$'' or equivalently that ``$b$ is a \emph{coface}\index{coface} of $a$.''  These relationships determine the topology of an abstract simplicial complex, in terms of its open and closed subsets.  A \emph{closed set}\index{closed set} $A$ of a simplicial complex contains every possible subset of every element of $A$.  The \emph{star}\index{star} of a subset $A$ of a simplicial complex consists of the set of all simplices containing an element of $A$.  An \emph{open set} of an abstract simplicial complex is one that can be written as a union of stars.

For example, let $\Delta$ be given by all nonempty subsets of sets in $\{ \{1,2\},\{1,3\},\{2,3,4\},\{5\}\}$. Then $\Delta$ is an abstract simplicial complex of dimension $2 = |\{2,3,4\}|-1$; Figure \ref{fig:simpleASC} shows a geometric realization of $\Delta$.

\begin{figure}[htb]
\centering
\includegraphics[trim = 70mm 120mm 70mm 120mm, clip, keepaspectratio, width=.5\textwidth]{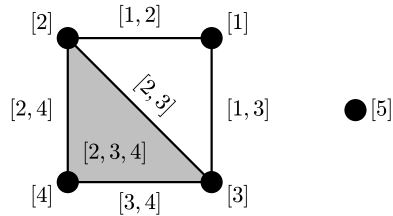}
\caption[Geometric realization of an abstract simplicial complex]{A geometric realization of the abstract simplicial complex $\Delta = (2^{\{1,2\}} \cup 2^{\{1,3\}} \cup 2^{\{2,3,4\}} \cup 2^{\{5\}}) \backslash \emptyset \subseteq 2^V$ with $V = {\{1,2,3,4,5\}}$. The expression of an abstract simplicial complex as a nondegenerate union of power sets manifestly reflects its facets.}
\label{fig:simpleASC}
\end{figure}


In the abstract simplicial complex $\Delta$, the set $A= \{[2,4],[2],[4]\}$ is a closed set but $B=\{[1,2],[1,3],[1],[5]\}$ is not, because $[3]$ is a face of $[1,3]$ that is not contained in $B$.  On the other hand, $B$ is the union of the star over $[1]$ and the star over $[5]$, so $B$ is an open set.  A set can be both open and closed; $\{[5]\}$ is such as set.

Functions that preserve the simplices of abstract simplicial complexes are afforded special status, and are called \emph{simplicial maps}.  These help characterize salient features of abstract simplicial complexes.

\begin{definition}
A \emph{simplicial map}\index{simplicial!map} $f : \Delta \to \Gamma$ from one abstract simplicial complex $\Delta$ to another $\Gamma$ is a function on vertices such that each simplex $\sigma = [v_0, \dots, v_p]$ of $\Delta$ is taken to a simplex $f(\sigma) = [f(v_0), \dots, f(v_p)]$ of $\Gamma$.  
\end{definition}
In the image $f(\sigma)$, repeated vertices count as one vertex.  This means that simplicial maps may decrease the dimension of a simplex but not increase it.

Simplicial maps immediately give rise to the notion of isomorphic abstract simplicial complexes\index{simplicial!isomorphism}: $\Delta$ and $\Gamma$ are \emph{isomorphic} if there are simplicial maps $f: \Delta \to \Gamma$ and $g: \Gamma \to \Delta$ such that $f = g^{-1}$ and $g = f^{-1}$.  Isomorphisms are a natural equivalence relation on abstract simplicial complexes, and generalize the idea of relabeling vertices in a simplicial complex.

It is rather computationally difficult to study abstract simplicial complexes and simplicial maps directly.  It is much easier to work by analogy: transform abstract simplicial complexes into vector spaces, and simplicial maps into linear maps.  The way we will do this is by way of a construction called \emph{simplicial homology}.  The construction is a two-step process, in which we first transform each abstract simplicial complex into an algebraic construction called a \emph{chain complex} and each simplicial map transforms into a \emph{chain map}.  From there, chain complexes and chain maps allow us to compute topological invariants via linear algebra.


\begin{figure}[htb]
\begin{center}
\includegraphics[trim = 60mm 110mm 60mm 115mm, clip, keepaspectratio, width=.7\textwidth]{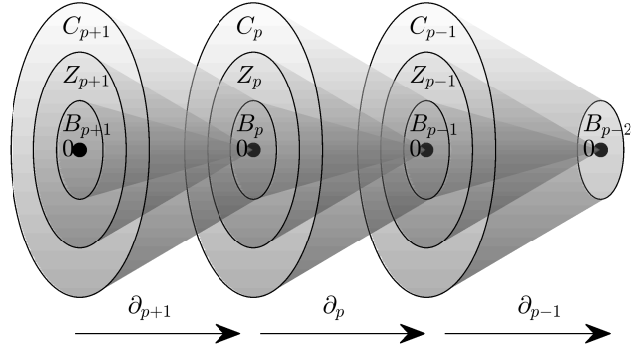}
\caption[Schematic of a chain complex]{Schematic picture of a chain complex.}
\label{fig:ChainComplex}
\end{center}
\end{figure}

\begin{definition}\label{1def:ChainComplex}
A \emph{chain complex} \index{chain complex} over a field $\mathbb{F}$ is a pair of sequences (indexed by $p \in \mathbb{N}$ or $\mathbb{Z}$ depending on context) of $\mathbb{F}$-vector spaces $C_\bullet = \{\dotsc,C_{p-1},C_p,\dotsc\}$ and linear \emph{boundary operators} \index{boundary operator} $\partial_p : C_p \rightarrow C_{p-1}$ such that $\partial_{p-1} \circ \partial_p \equiv 0$. This can be schematically depicted as in Fig. \ref{fig:ChainComplex}, and written as (for $p \in \mathbb{N}$)
\begin{equation}
\label{eq:chainComplex}
\xymatrix{
  \dotsb \ar[r] & C_{p+1} \ar[r]^{\partial_{p+1}} & C_p \ar[r]^{\partial_p} & C_{p-1} \ar[r]^{\partial_{p-1}} & \dotsb \ar[r]^{\partial_1} & C_0 \ar[r]^{\partial_0} & 0.
}
\end{equation}
\end{definition}

Given an abstract simplicial complex $\Delta$, let $C_p(\Delta)$ be the $\mathbb{F}$-vector space generated by basis elements $e_{(v_0,\dots,v_p)}$ corresponding to \emph{oriented simplices} of \emph{dimension} $p$ in $\Delta$. This essentially means that if $\sigma$ is a permutation acting on $(v_0,\dots,v_p)$, then $e_{(v_0,\dots,v_p)} = (-1)^\sigma e_{(v_{\sigma(0)},\dots,v_{\sigma(p)})}$, where $(-1)^\sigma$ indicates the sign of the permutatation $\sigma$.
\footnote{
Thus for example $e_{(v_0,v_1,v_2)} = -e_{(v_0,v_2,v_1)}$.
}
\footnote{
Note that an order on $V$ induces orders (and hence orientations) on simplices in $\Delta$.
}

The simplicial boundary operator $\partial_p$ is now defined to be the linear map acting on basis elements as
\begin{equation}
\label{eq:simplicial}
\partial_p e_{( v_0,\dots,v_p )} := \sum_{j=0}^p (-1)^j e_{ \nabla_j ( v_0,\dots,v_p )}
\end{equation}
where $\nabla_j$ deletes the $j$th entry of a tuple. It turns out that this construction yields a \emph{bona fide} chain complex, called the \emph{simplicial chain complex}\index{chain complex!simplicial} for $C_\bullet(\Delta)$.  To compute $\partial_{p-1} \circ \partial_p$, we delete two entries from $e_{(v_0,\dots,v_p)}$.  There are two different ways we can do this: first $i$ and then $j$, or first $j$ and then $i$.  These two ways yield opposite signs, which cancel all of the terms in the sum.

Like the structure-preserving nature of simplicial maps for abstract simplicial complexes, there are structure preserving \emph{chain maps}\index{chain map} for chain complexes.  They are defined by way of diagrams
\begin{equation*}
\xymatrix{
\dots \ar[r] & C_{p+1} \ar[r]^{\partial_{p+1}} \ar[d]^{m_{p+1}} & C_{p} \ar[r]^{\partial_{p}} \ar[d]^{m_{p}} & C_{p-1} \ar[r]^{\partial_{p-1}} \ar[d]^{m_{p-1}} & \dotsb\\
\dots \ar[r] & C'_{p+1} \ar[r]_{\partial'_{p+1}} & C'_{p} \ar[r]_{\partial'_{p}} & C'_{p-1} \ar[r]_{\partial'_{p-1}}  & \dotsb
}
\end{equation*}
in which composition of consecutive maps is path-independent.  Because in such a diagram
\begin{equation*}
m_{p-1} \circ \partial_p = \partial'_p \circ m_p,
\end{equation*}
it is said to \emph{commute}.

A somewhat involved but straightforward calculation establishes the following key result about the simplicial chain complex.
  
\begin{proposition}
 Every simplicial map $f: \Delta \to \Gamma$ between abstract simplicial complexes induces a chain map $f_\bullet : C_\bullet(\Delta) \to C_\bullet(\Gamma)$ between their simplicial chain complexes.
\end{proposition}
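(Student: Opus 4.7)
The plan is to define the chain map $f_\bullet$ on basis elements and then verify that it commutes with the boundary operators. For each $p$, define $f_p : C_p(\Delta) \to C_p(\Gamma)$ on a basis element $e_{(v_0,\dots,v_p)}$ by
\[
f_p\bigl(e_{(v_0,\dots,v_p)}\bigr) := \begin{cases} e_{(f(v_0),\dots,f(v_p))} & \text{if } f(v_0),\dots,f(v_p) \text{ are distinct,} \\ 0 & \text{otherwise,} \end{cases}
\]
extended linearly. The first thing to check is that $f_p$ is well defined on oriented simplices: if $\sigma$ is a permutation of $(v_0,\dots,v_p)$, then it also permutes $(f(v_0),\dots,f(v_p))$ with the same sign, so the defining relation $e_{(v_0,\dots,v_p)} = (-1)^\sigma e_{(v_{\sigma(0)},\dots,v_{\sigma(p)})}$ is respected on both branches of the definition.

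Next, I would verify commutativity $f_{p-1} \circ \partial_p = \partial'_p \circ f_p$ on each basis element $e_{(v_0,\dots,v_p)}$. When $f(v_0),\dots,f(v_p)$ are all distinct, both sides evaluate termwise to
\[
\sum_{j=0}^p (-1)^j e_{\nabla_j(f(v_0),\dots,f(v_p))},
\]
so commutativity is immediate. When exactly one pair is identified, say $f(v_i)=f(v_j)$ with $i<j$, the right-hand side $\partial'_p \circ f_p$ vanishes, and on the left the only terms in $\partial_p e_{(v_0,\dots,v_p)}$ that survive under $f_{p-1}$ are the $k=i$ and $k=j$ terms, since every other face still contains both $f(v_i)$ and $f(v_j)$.

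The main obstacle is showing that these two surviving terms cancel; this is the only place signs really have to be tracked. After applying $f_{p-1}$, the $k=i$ term yields $f(v_j)$ at position $j-1$ while the $k=j$ term yields $f(v_i)=f(v_j)$ at position $i$ of an otherwise identical tuple. Transposing adjacent entries $j-1-i$ times rewrites one as $(-1)^{j-1-i}$ times the other, so the sum is $\bigl[(-1)^i(-1)^{j-1-i} + (-1)^j\bigr] = (-1)^{j-1}+(-1)^j = 0$. Finally, I would note that when three or more images coincide, all relevant faces of dimension $p-1$ still contain a repeated vertex and so are annihilated by $f_{p-1}$, and both sides vanish trivially. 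A short remark could be added to confirm that each simplex of $\Delta$ is carried to a genuine simplex of $\Gamma$, using the defining property of a simplicial map, so that $f_p$ lands in $C_p(\Gamma)$ as claimed.
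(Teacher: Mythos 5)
Your proof is correct and is precisely the ``somewhat involved but straightforward calculation'' that the paper asserts but does not write out: the standard construction sending a simplex with degenerate image to $0$, with the only delicate point being the sign cancellation $(-1)^i(-1)^{j-1-i}+(-1)^j=0$ between the two nondegenerate faces when exactly one pair of vertices is identified, which you handle correctly. Nothing is missing; the closing remark that $f$ carries simplices of $\Delta$ to simplices of $\Gamma$ (so $f_p$ indeed lands in $C_p(\Gamma)$) should simply be stated as part of the proof rather than deferred.
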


While chain complexes distill abstract simplicial complexes into the realm of algebra, they are still rather complicated.  Moreover, the simplicial chain complex contains combinatorial, non-topological information.  \emph{Homology} is a convenient, linear algebraic summary for a chain complex that still preserves the structure of chain maps.  Additionally, the homology of the simplicial chain complex is a topological invariant.

\begin{definition}
Writing $Z_p := \text{ker } \partial_p$ and $B_p := \text{im } \partial_{p+1}$, the \emph{homology} \index{homology}\footnote{Homology is readily defined over rings, with the integers $\mathbb{Z}$ serving as the case through which all others factor via the universal coefficient theorem (which, incidentally, gave rise to the topics of category theory and homological algebra). However, most practical considerations require fields, and so we restrict the definition above accordingly.} of \eqref{eq:chainComplex} is the sequence of quotient spaces
\begin{equation}
H_p := Z_p/B_p.
\end{equation}
The \emph{Betti numbers} \index{Betti number} are $\beta_p := \text{dim } H_p = \text{dim } Z_p - \text{dim } B_p$.
\hfill{$\Box$}
\end{definition}

The essential point of this construction is that homology transforms chain complexes into vector spaces and chain maps into linear maps.

\begin{proposition}
  Every chain map $m_\bullet : C_\bullet \to D_\bullet$ induces a family of linear maps $(m_*)_p : H_p(C_\bullet) \to H_p(D_\bullet)$ between homology spaces, one for each $p$.
\end{proposition}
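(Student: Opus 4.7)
The plan is to define $(m_*)_p$ on a representative $z \in Z_p(C_\bullet)$ by the formula $(m_*)_p([z]) := [m_p(z)]$, and then verify three things: (i) $m_p$ carries cycles to cycles, (ii) $m_p$ carries boundaries to boundaries, and (iii) the resulting assignment descends to a well-defined linear map on the quotient spaces $H_p = Z_p/B_p$. All three follow from the commutativity relation $m_{p-1} \circ \partial_p = \partial'_p \circ m_p$ built into the definition of a chain map, together with the linearity of each $m_p$.

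First I would check (i): if $z \in Z_p(C_\bullet)$, i.e.\ $\partial_p z = 0$, then
\begin{equation*}
\partial'_p(m_p(z)) = m_{p-1}(\partial_p z) = m_{p-1}(0) = 0,
\end{equation*}
so $m_p(z) \in Z_p(D_\bullet)$. Next (ii): if $z = \partial_{p+1} w \in B_p(C_\bullet)$, then
\begin{equation*}
m_p(z) = m_p(\partial_{p+1} w) = \partial'_{p+1}(m_{p+1}(w)) \in B_p(D_\bullet).
\end{equation*}
Combining these, if $z' = z + b$ with $b \in B_p(C_\bullet)$ is another representative of $[z]$, then $m_p(z') = m_p(z) + m_p(b)$, and $m_p(b) \in B_p(D_\bullet)$ by (ii), so $[m_p(z')] = [m_p(z)]$ in $H_p(D_\bullet)$. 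This establishes (iii), so $(m_*)_p$ is well defined. Linearity of $(m_*)_p$ is inherited directly from the linearity of $m_p$ and the linear structure on quotient spaces.

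There is no real obstacle here; the proof is essentially a bookkeeping exercise once one identifies that every condition required of the induced map is precisely the commuting square applied to either a cycle (for cycles mapping to cycles) or a boundary (for boundaries mapping to boundaries). The one point worth flagging, if greater care were desired, is that the construction is natural: composition of chain maps induces composition of the associated maps on homology, and the identity chain map induces the identity on each $H_p$. This, however, is not asserted in the statement and would be an immediate consequence of the componentwise definition $(m_*)_p[z] = [m_p(z)]$.
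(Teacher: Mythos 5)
Your proof is correct and is exactly the standard argument: the paper states this proposition without proof, and the verification you give (cycles map to cycles, boundaries map to boundaries, hence a well-defined linear map on the quotients, all via the commuting square $m_{p-1}\circ\partial_p = \partial'_p\circ m_p$) is the computation the authors leave implicit. No gaps.
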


As an immediate consequence, a simplicial map $f: \Delta \to \Gamma$ induces a family of linear maps $H_p(C_\bullet(\Delta)) \to H_p(C_\bullet(\Gamma))$ between the homologies of the corresponding simplicial chain complexes.  We will call
\begin{equation*}
  H_p(\Delta) := H_p(C_\bullet(\Delta))
\end{equation*}
the \emph{$p$-simplicial homology} \index{simplicial!homology} of the abstract simplicial complex $\Delta$.  What this means is that if two simplicial complexes are isomorphic, then their simplicial homologies will also be isomorphic vector spaces for every index.  Conversely, if two abstract simplicial complexes have different simplicial homologies, we know that they cannot be isomorphic as simplicial complexes.

For our purposes here, simplicial homology is practically valuable because it underlies \emph{cyclomatic complexity} \index{cyclomatic complexity} \cite{McCabe1976}, which is essentially the first (and only nontrivial) Betti number of a control flow graph treated as an abstract simplicial complex (i.e., edges correspond to $1$-simplices as in Figure \ref{fig:Konigsberg}). Cyclomatic complexity is an archetypal and widely used \cite{Ebert2016} software metric that can guide fuzzing \cite{Duran2011, Iozzo2010} and identification of fault-prone or vulnerable code \cite{Alves2016, Du2019, Medeiros2017}. In \S \ref{Path}, we briefly discuss \emph{path homology}\index{path homology}, which has promise for generalizing cyclomatic complexity to higher dimensions.

\begin{definition}\label{1def:EulerCharacteristic}{\rm
In the event that all but finitely many $\beta_p$ are zero, the \emph{Euler characteristic} \index{Euler characteristic} $\chi := \sum_p (-1)^p \beta_p$ is well-defined. For abstract simplicial complexes, we get the familiar formula $\chi = V - E + F - \dots$, where the terms on the right hand side respectively indicate the numbers of vertices/0-simplices, edges/1-simplices, faces/2-simplices, etc.
}\end{definition}

Moreover, the simplicial Betti numbers $\beta_p$ count the number of voids of dimension $p$ in a geometric realization of an abstract simplicial complex.
\footnote{
Here, 0-dimensional voids amount to connected components. 
}

For example, consider $\Delta = 2^{\{1,2,3\}} \backslash \emptyset$: $C_2(\Delta) = \langle e_{(1,2,3)} \rangle$, where $\langle \cdot \rangle$ indicates the vector span (say, over $\mathbb{R}$); $C_1(\Delta) = \langle e_{(1,2)}, e_{(1,3)}, e_{(2,3)} \rangle$; $C_0(\Delta) = \langle e_{(1)}, e_{(2)}, e_{(3)} \rangle$, and all other $C_p(\Delta)$ are $0$. Using lexicographic indexing for basis elements, we have the matrix representations
\begin{equation}
\label{eq:SimpleComplexMaps}
\partial_2 = \begin{pmatrix} 
1 \\
-1 \\
1 
\end{pmatrix}; \quad
\partial_1 = \begin{pmatrix} 
-1 & -1 & 0 \\
1 & 0 & -1 \\
0 & 1 & 1
\end{pmatrix},
\end{equation}
and all other boundary operators are zero. For example, the boundary of the 2-simplex or ``triangle'' is
\begin{equation}
\partial_2 e_{(1,2,3)} = e_{(1,2)} - e_{(1,3)} + e_{(2,3)}, \nonumber
\end{equation}
or in matrix form 
\begin{equation}
\begin{pmatrix} 
1 \\
-1 \\
1 
\end{pmatrix} 
\begin{pmatrix} 
1 \end{pmatrix} = 
\begin{pmatrix} 
1 \\
-1 \\
1 
\end{pmatrix}.
\end{equation}
Its boundary in turn is
\begin{equation}
\partial_1 \left ( e_{(1,2)} - e_{(1,3)} + e_{(2,3)} \right ) = 0, \nonumber
\end{equation}
or in matrix form 
\begin{equation}
\begin{pmatrix} 
-1 & -1 & 0 \\
1 & 0 & -1 \\
0 & 1 & 1
\end{pmatrix} 
\begin{pmatrix} 
1 \\
-1 \\
1 
\end{pmatrix} = 
\begin{pmatrix} 
0 \\
0 \\
0 
\end{pmatrix}.
\end{equation}
Thus the homology of the boundary of a triangle has $\beta_p = \delta_{p1}$: there is a single void in dimension 1, and none in other dimensions.

As a slightly more detailed example, take $V = \{1,\dots,5\}$ and $\Delta$ to be all nonempty subsets of sets in $\{ \{1,2\},\{1,3\},\{2,3,4\},\{5\}\}$, as in Figure \ref{fig:simpleASC}. We have the chain complex (over $\mathbb{R}$)
\begin{equation}
\label{eq:ExampleComplex}
0 \overset{\partial_3}{\longrightarrow} C_2 \overset{\partial_2}{\longrightarrow} C_1 \overset{\partial_1}{\longrightarrow} C_0 \overset{\partial_0}{\longrightarrow} 0
\end{equation}
where $C_2 = \left \langle e_{(2,3,4)} \right \rangle$, $C_1 = \left \langle e_{(1,2)}, e_{(1,3)}, e_{(2,3)}, e_{(2,4)}, e_{(3,4)} \right \rangle$,  $C_0 = \left \langle e_{(1)}, e_{(2)}, e_{(3)}, e_{(4)}, e_{(5)} \right \rangle$, and the nontrivial boundary operators are (again, lexicographically ordering basis elements)
\begin{equation}
\label{eq:ExampleBoundaries}
\partial_2 = \begin{pmatrix} 0 \\ 0 \\ 1 \\ -1 \\ 1 \end{pmatrix}; \quad \partial_1 = \begin{pmatrix} -1 & -1 & 0 & 0 & 0 \\ 1 & 0 & -1 & -1 & 0 \\ 0 & 1 & 1 & 0 & -1 \\ 0 & 0 & 0 & 1 & 1 \\ 0 & 0 & 0 & 0 & 0
\end{pmatrix}.
\end{equation}
A few row reductions yield that $\text{rank}(\partial_1) = 1$ and $\text{rank}(\partial_2) = 3$, which gives the hard part of Table \ref{table:ExampleComplex}. It follows that $\beta_\bullet = (2,1,0,\dots)$. Indeed, a geometric realization of $\Delta$ has two connected components and one hole.

\begin{table}
    \title{Betti numbers for \eqref{eq:ExampleComplex}} \\
    \begin{tabular}{|c|c|c|c|}
        \hline
        $p$ & $\dim Z_p$ & $\dim B_p$ & $\beta_p$ \\
        \hline
        $0$ & $5$ & $3$ & $2$ \\
        $1$ & $2$ & $1$ & $1$ \\
        $2$ & $0$ & $0$ & $0$ \\
        \hline
    \end{tabular}
    \label{table:ExampleComplex}
\end{table}

As a more intricate example, take $V = \{1,\dots,18\}$ and $\Delta$ to be all nonempty subsets of sets in 
\begin{align}
\{ & \{1,2,3\},\{1,4\},\{5\},\{6,7,8,9\},\{9,10,12\},\{10,11,12\}, \nonumber \\
& \{12,13,16\},\{13,14\},\{13,15\},\{14,15\},\{16,17\},\{17,18\}\}. \nonumber
\end{align}


\begin{figure}[htb]
\centering
\includegraphics[trim = 60mm 120mm 60mm 110mm, clip, keepaspectratio, width=.75\textwidth]{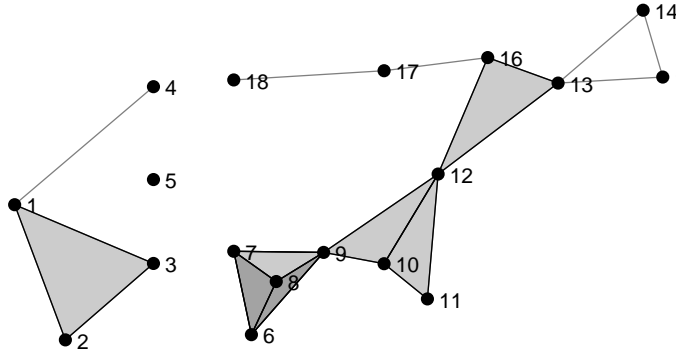}
\caption[ASC]{A geometric realization of an ASC with Betti numbers $\beta_\bullet = (3,1,0,\dots)$.}
\label{fig:ASC}
\end{figure}

Now $\partial_p$ acts on the span of all vectors of the form $e_{(v_0,\dots,v_p)}$ where $\{v_0,\dots,v_p\} \in \Delta$. Meanwhile, a brief calculation shows that $\text{ker } \partial_2 = \text{im } \partial_3 \oplus 0$, $\text{ker } \partial_1 = \text{im } \partial_2 \oplus \left \langle e_{(13,14)} - e_{(13,15)} + e_{(14,15)} \right \rangle$, and $\text{ker } \partial_0 = \text{im } \partial_1 \oplus \left \langle e_{(1)}, e_{(5)}, e_{(6)} \right \rangle = \left \langle e_{(1)}, \dots e_{(18)} \right \rangle$. It follows that $H_p = 0$ for $p \ge 2$, $H_1 = \left \langle e_{(13,14)} - e_{(13,15)} + e_{(14,15)} \right \rangle$, and $H_0 = \left \langle e_{(1)}, e_{(5)}, e_{(6)} \right \rangle$: thus $\beta_\bullet = (3,1,0,\dots)$. A geometric realization of $\Delta$ has three connected components and one hole, as shown in Figure \ref{fig:ASC}.

\subsection{\label{sec:Dowker}Dowker homology}
 
For finite sets $X$, $Y$ and a relation $R \subseteq X \times Y$,
\footnote{
Recall that this inclusion is just the definition of a generic relation between $X$ and $Y$.
}
we can form two abstract simplicial complexes. The first has vertex set $X$ and simplices generated by finite subsets of $R(\cdot,y) := \{x \in X : (x,y) \in R\}$ for $y \in Y$; the second has vertex set $Y$ and simplices generated by finite subsets of $R(x,\cdot)$ for $x \in X$. Remarkably, these two abstract simplicial complexes are topologically equivalent under the very strong notion of homotopy \cite{Dowker1952}, and either is referred to as a \emph{Dowker complex} \index{Dowker complex} of the relation $R$. Almost as remarkably, the $\mathbb{F}_2$ homology of Dowker complexes can be computed directly from the relation $R$ in about 50 lines of straightforward MATLAB\textsuperscript{\textregistered} code, and the only computationally intensive part is computing the rank of the boundary matrices.

For example, consider the relation specified by the 0-1 matrix
\footnote{We adapt this example (which also informs a previous one) from \cite{GhristEAT}.}
\begin{equation}
\label{eq:ExampleRelation}
R = \begin{pmatrix} 
0 & 1 & 0 & 0 & 0 & 1 & 0 \\
0 & 0 & 0 & 1 & 0 & 1 & 1 \\
1 & 1 & 0 & 1 & 0 & 0 & 1 \\
1 & 0 & 0 & 1 & 0 & 0 & 0 \\
0 & 0 & 1 & 0 & 1 & 0 & 0 
\end{pmatrix}.
\end{equation}
Taking the choice of vertex set $X = \{1,\dots,5\}$, we get the same chain complex as \eqref{eq:ExampleComplex} and \eqref{eq:ExampleBoundaries}, but over $\mathbb{F}_2$: i.e., all signs in \eqref{eq:ExampleBoundaries} are ignored. The matrix element $(\partial_p)_{jk}$ indicates whether or not the set corresponding to the $j$th basis element in $C_{p-1}$ is contained in the set corresponding to the $k$th basis element in $C_p$. Computer calculations yield the same results as in Table \ref{table:ExampleComplex}.

Dowker complexes have a long history of applications to social disciplines under the aegis of ``Q-analysis'' \cite{Atkin1974}; however, only recently have applications gained any wider traction, e.g. to navigation and mapping \cite{Ghrist2012}, lower bounds in privacy analyses \cite{Erdmann2017}, and analyses of weighted digraphs \cite{Chowdhury2018}. The preceding example highlights a circle of ideas that is very interesting for cyber applications. In the following, we detail another application (in many ways mirroring \cite{robinson2017sheaf}) of Dowker complexes to the analysis and characterization of ``straight-line'' source code and/or \emph{basic blocks} \index{basic block} in binary code (i.e., sequences of instructions without control flow).

Programs are fairly simple to define and have a simple decision procedure for determining equality: a program is a string in some language, and two programs are equal if and only if they are equal as strings. Meanwhile, functions are also fairly simple to define (even in the context of computers, via the theory of denotational semantics \cite{nielson1992semantics}), though the problem of determining equality of functions within even simple classes is undecidable \cite{richardson1969some}. However, \emph{algorithms} are notoriously hard to define, and though there is a sort of order structure on reasonable definitions \cite{yanofsky2017galois}, all of the definitions that are substantively different from programs also lead to undecidable equality problems (but see, e.g. \cite{Taherkhani2010, Mesnard2016, Shalaby2017} for the sorts of heuristics used in practice).

To illustrate this notion, consider the sets of ``algorithms'' in Fig. \ref{fig:toy1} and \ref{fig:toy2}: each set has the same inputs (\texttt{a}, \texttt{b}, \texttt{c}, and \texttt{d}), and outputs (\texttt{q} and \texttt{x}), albeit computed differently. Absent notions of control flow (e.g., conditional branches or loops), it is easy to define a relation between variables and assignments and construct the corresponding Dowker complex. In these examples, the homology classes associated to ``primitive'' algorithms on the left are preserved under compilation-like rewrites, though additional homology classes can be introduced by ``tearing apart'' high-arity assignments into low-arity ones. More formally and suggestively, the primitive notion of ``decompilation'' indicated here is an injective simplicial map, and thus induces a homomorphism on homology.

\begin{figure}[htb]
\begin{center}
\includegraphics[trim = 40mm 65mm 45mm 85mm, clip, keepaspectratio, width=.21\textwidth]{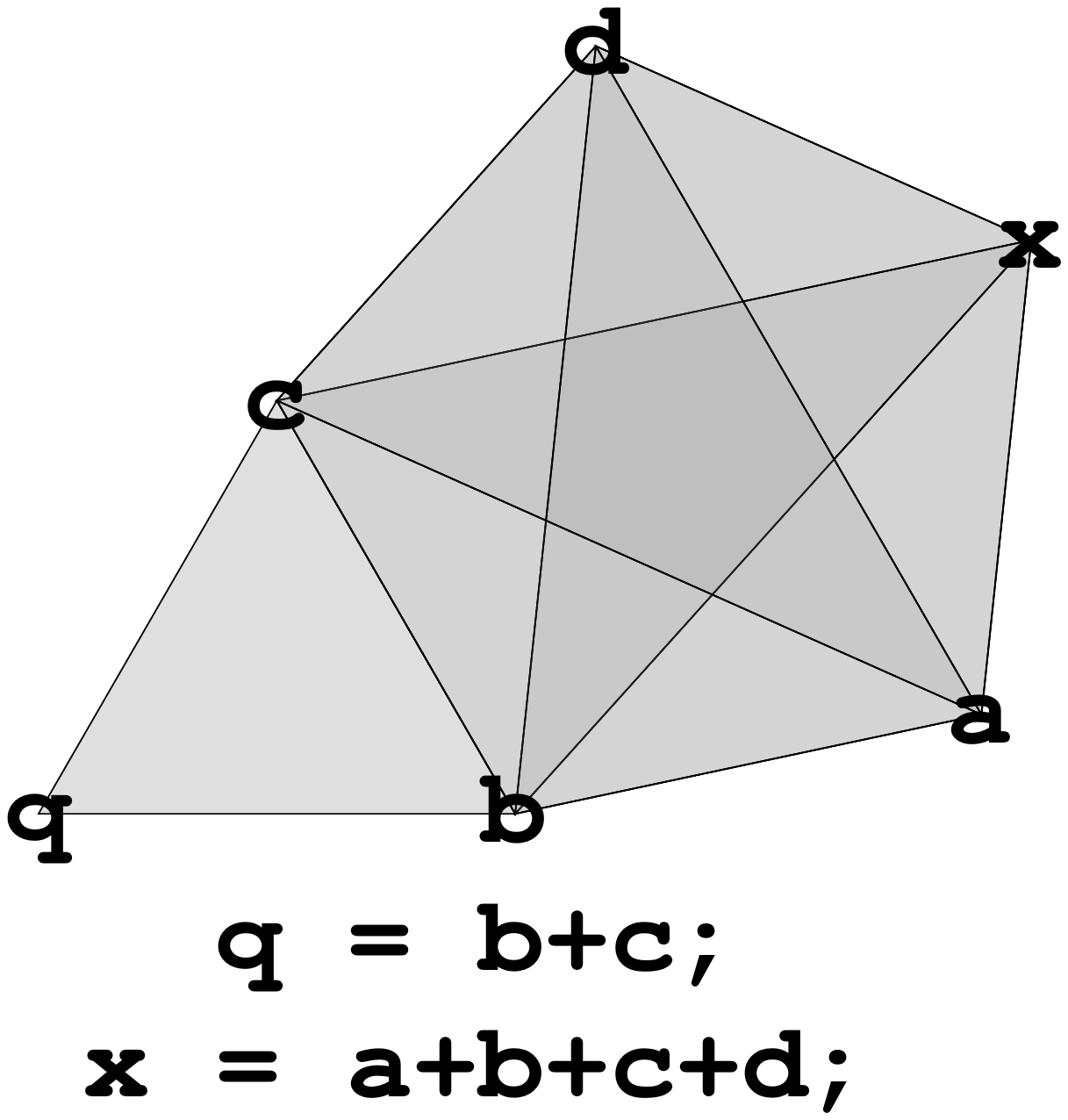} \quad
\includegraphics[trim = 35mm 65mm 45mm 90mm, clip, keepaspectratio, width=.21\textwidth]{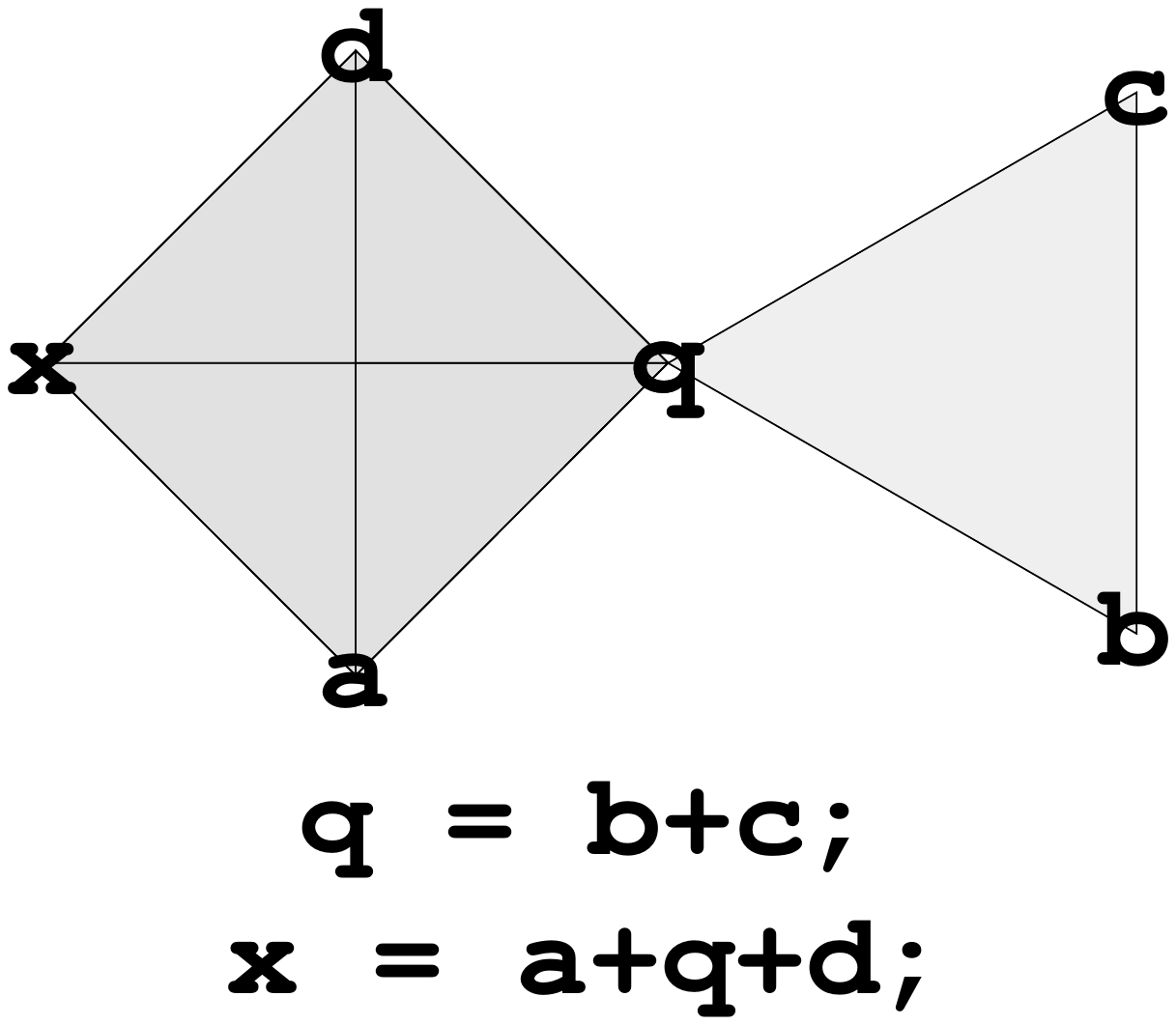} \quad
\includegraphics[trim = 0mm 10mm 75mm 0mm, clip, keepaspectratio, width=.3\textwidth]{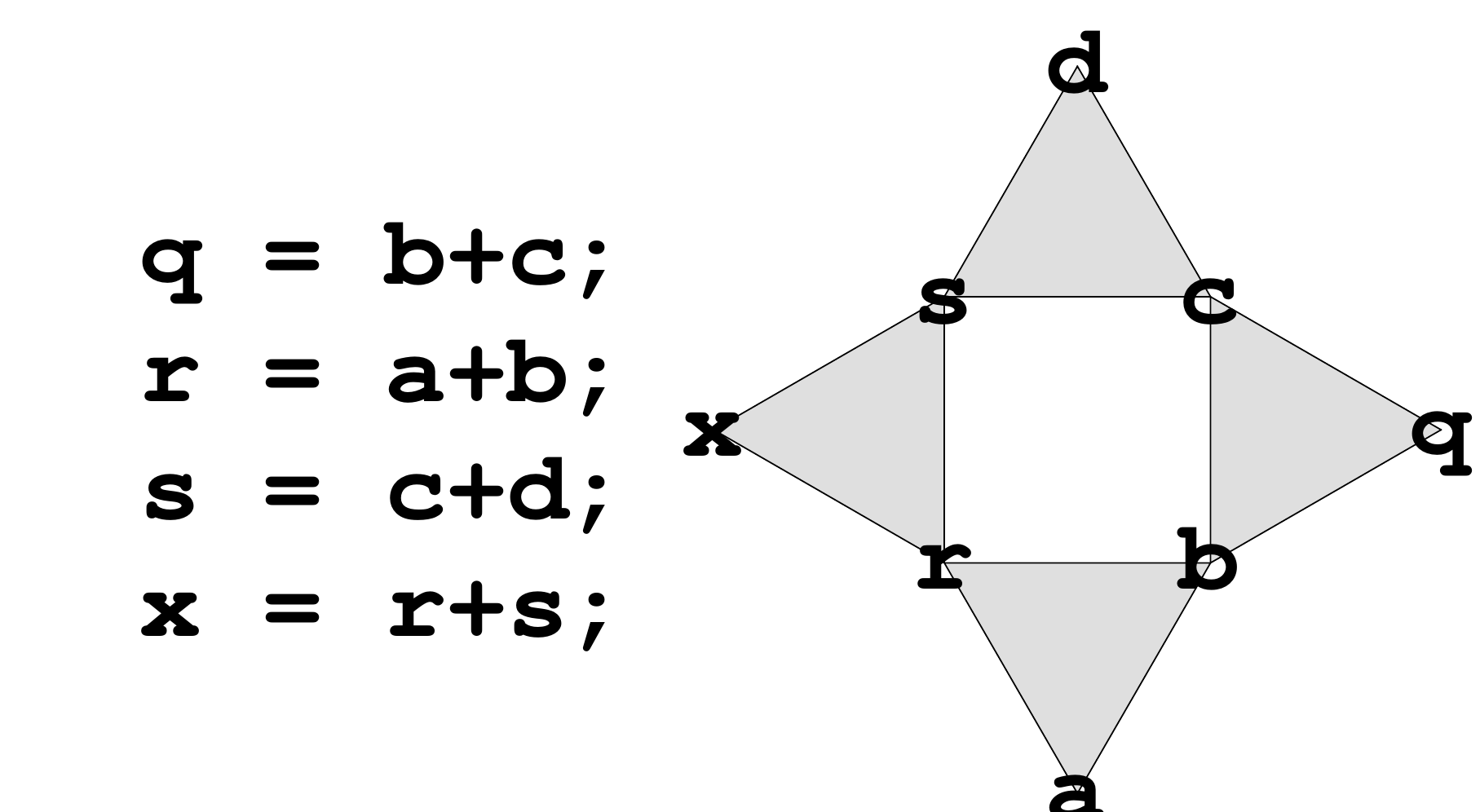}
\end{center}
\caption[Dowker complexes of toy algorithms (1)]{From left to right: Dowker complexes for a toy algorithm, a similar algorithm, and a ``compiled'' version.}
\label{fig:toy1}
\end{figure}

\begin{figure}[htb]
\includegraphics[trim = 35mm 90mm 55mm 90mm, clip, keepaspectratio, width=.32\textwidth]{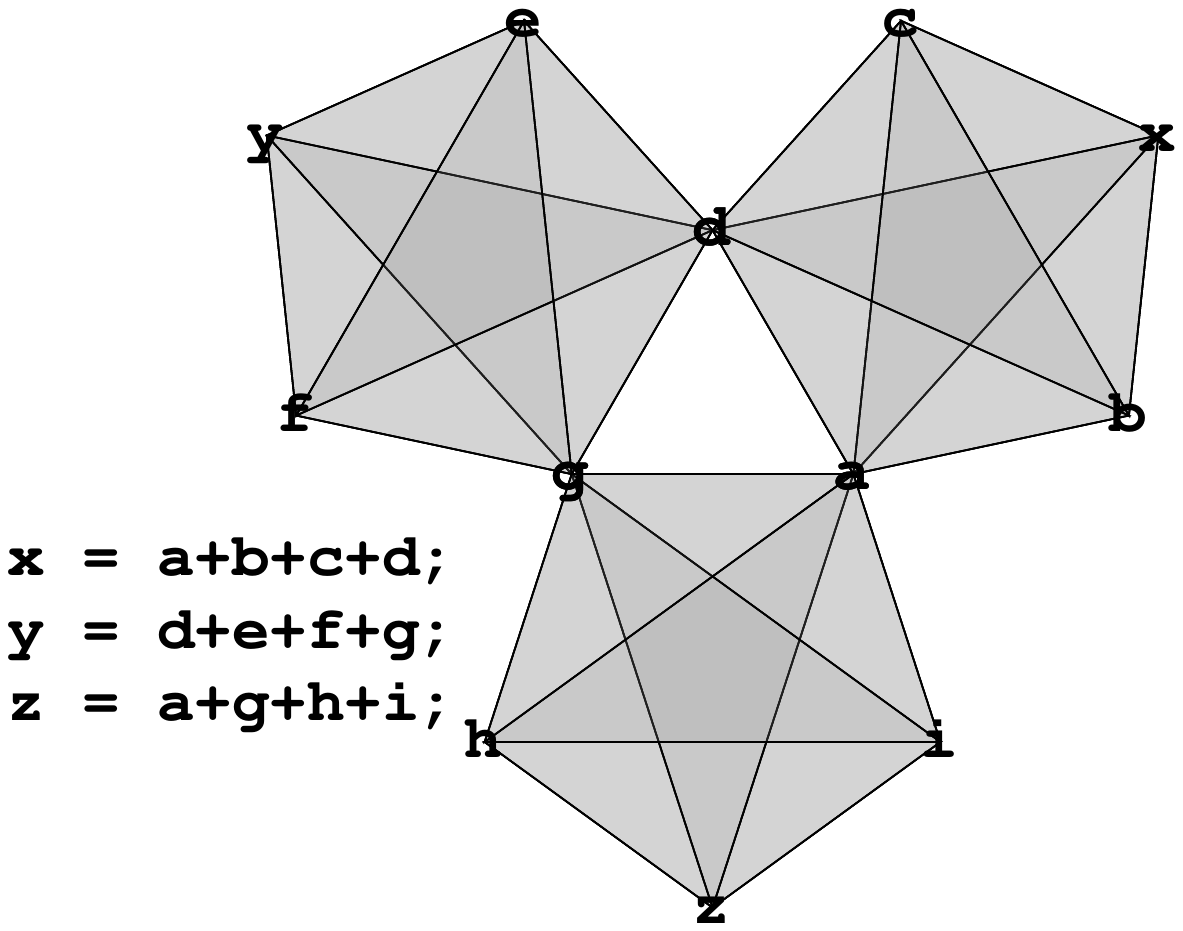}
\includegraphics[trim = 35mm 90mm 55mm 90mm, clip, keepaspectratio, width=.32\textwidth]{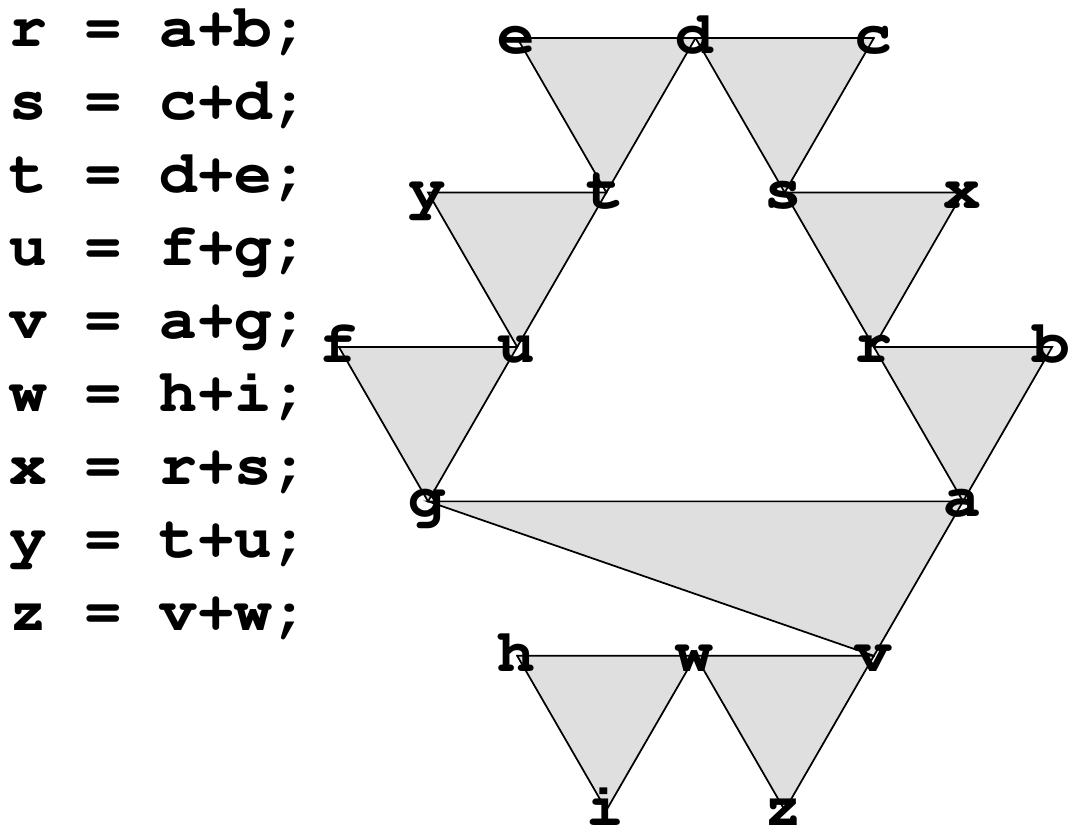}
\includegraphics[trim = 35mm 90mm 55mm 90mm, clip, keepaspectratio, width=.32\textwidth]{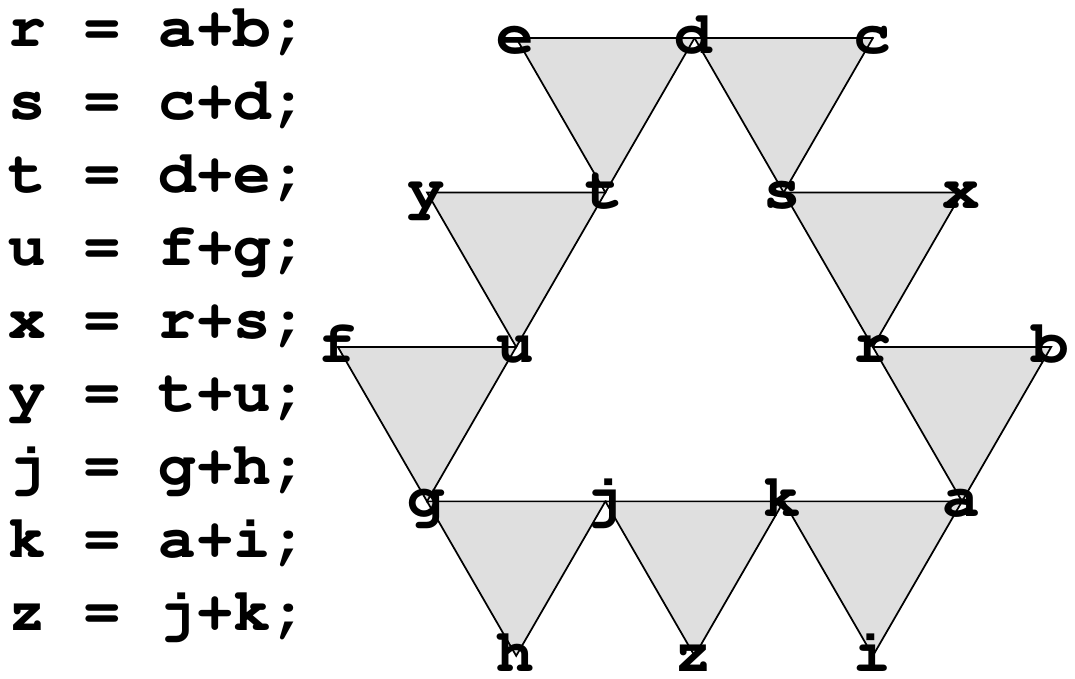}
\caption[Dowker complexes of toy algorithms (2)]{From left to right: Dowker complexes for a toy algorithm, and two ``compiled'' versions.}
\label{fig:toy2}
\end{figure}

Notwithstanding the problems of defining algorithms and the undecidability of gauging equality (much less computing a principled similarity) of algorithms in general, Dowker homology can capture salient information about straight-line or basic block algorithms.
\footnote{To handle control flow nicely, F. R. Genovese (private communication) has suggested considering a so-called \emph{\'etal\'e space} building on the sheaf implied by considering subsets of instructions/assignments. However, given \emph{any} construct capable of dealing with basic control flow in the present context, it should be possible to ``desugar'' more complex language semantics to deal with correspondingly more complex control flow and data structures.}  
In this restricted setting, it is not hard to identify various narrow classes of algorithms that admit reasonable definitions. 

For example, sorting networks are fixed compositions of pairwise compare/swap operations that guarantee to sort an input tuple of a given size \cite{knuth1997art}, and fixed-size matrix multiplication algorithms are essentially rank-1 decompositions of a particular tensor \cite{landsberg2017geometry}. In both cases, the formulation of optimal algorithms is a nontrivial problem. For matrix multiplication, the ``na\"ive'' algorithm was originally improved upon by \cite{strassen1969gaussian,winograd1971multiplication}, which showed how to multiply two $2 \times 2$ matrices with only 7 scalar multiplications (versus 8 for the na\"ive approach). Although these instances are known to be optimal, all that is known for the $3 \times 3$ case is that somewhere between 19 and 23 scalar multiplications are required (versus 27 for the na\"ive approach), and over noncommutative (resp., commutative) rings 23 (resp., 22) scalar multiplications is the best known result, achieved by many inequivalent algorithms \cite{laderman1976noncommutative,johnson1986noncommutative,makarov1986algorithm,courtois2011new} which we analyze below along with the na\"ive algorithms and some ``compiled'' variants where all assignments have two inputs. There is also recent work producing still more $3 \times 3$ algorithms (see, e.g. \cite{chokaev2018two,ballard2019geometry,heule2019new}) and notions of matrix multiplication algorithm equivalence for more general sizes \cite{berger2019equivalent}.

Figures \ref{fig:sorting4} and \ref{fig:sorting56} illustrate how Dowker homology can distinguish between optimal sorting networks: using the negative Euler characteristic as a measure of topological complexity highlights networks that exhibit more comparator reuse and symmetry. 

\begin{figure}[htb]
\includegraphics[trim = 15mm 255mm 120mm 15mm, clip, keepaspectratio, width=.4\textwidth]{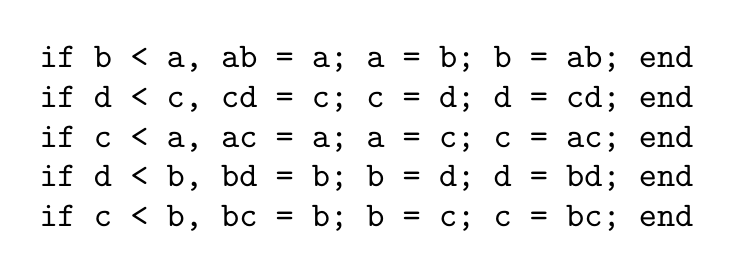}
\includegraphics[trim = 45mm 90mm 45mm 90mm, clip, keepaspectratio, width=.16\textwidth]{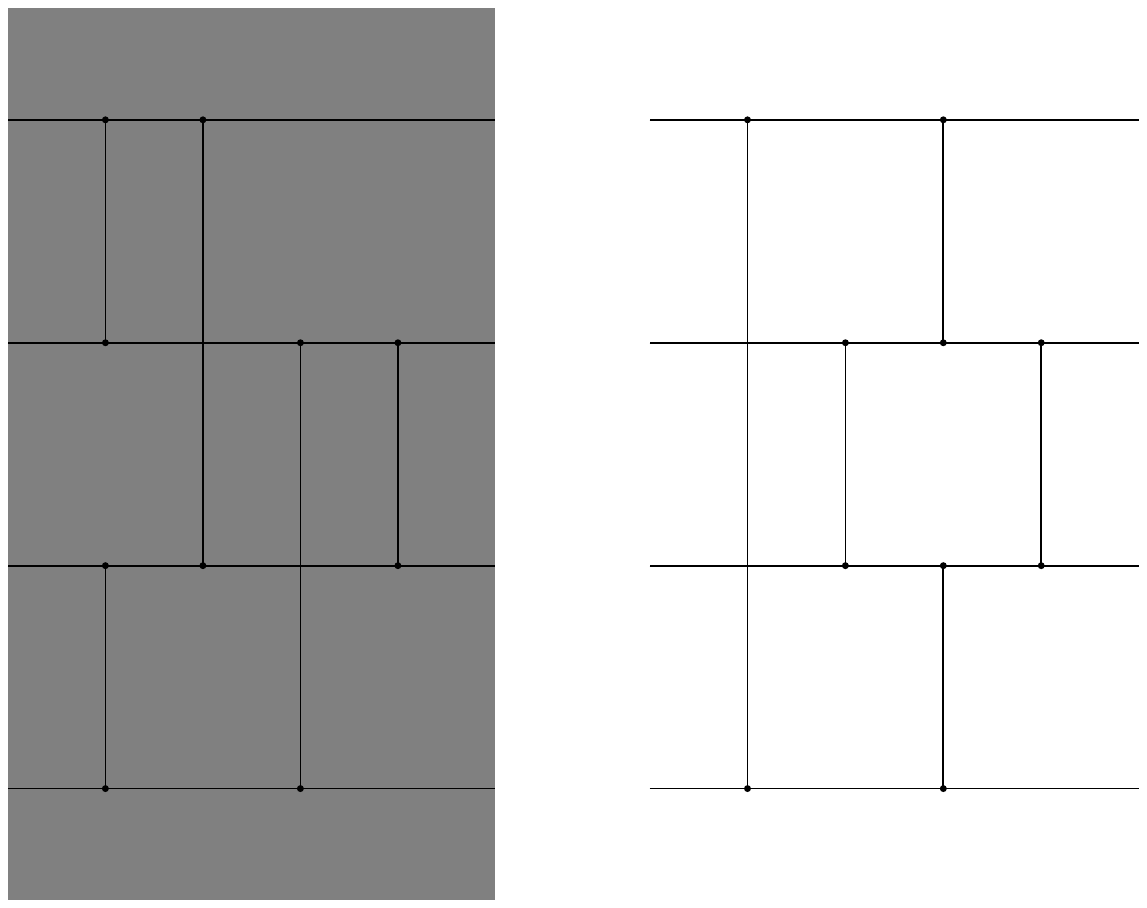}
\includegraphics[trim = 15mm 255mm 120mm 15mm, clip, keepaspectratio, width=.4\textwidth]{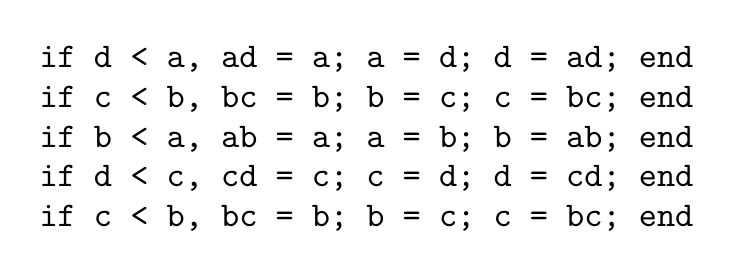}
\caption[Topology of optimal sorting network algorithms on four inputs]{From left to right: code represetation of an optimal sorting network for $n = 4$; graphical representation of the same network (with inputs on left labeled $\texttt{a}$ through $\texttt{d}$ from top down and outputs on right), graphical representation of the other optimal network; code representation of the other optimal network. The graphical representations are shaded by $-\chi$ (lower values are paler) of Dowker complexes formed from code (by treating the statements $\texttt{if k < j}$ as vertices $\texttt{i}_{\texttt{jk}}$.
While the graphical representations are topologically equivalent (specifically, both are homotopic to a figure eight), the Dowker complexes are respectively homotopic to a figure eight and a circle.}
\label{fig:sorting4}
\end{figure}

\begin{figure}[htb]
\includegraphics[trim = 45mm 90mm 45mm 90mm, clip, keepaspectratio, width=.49\textwidth]{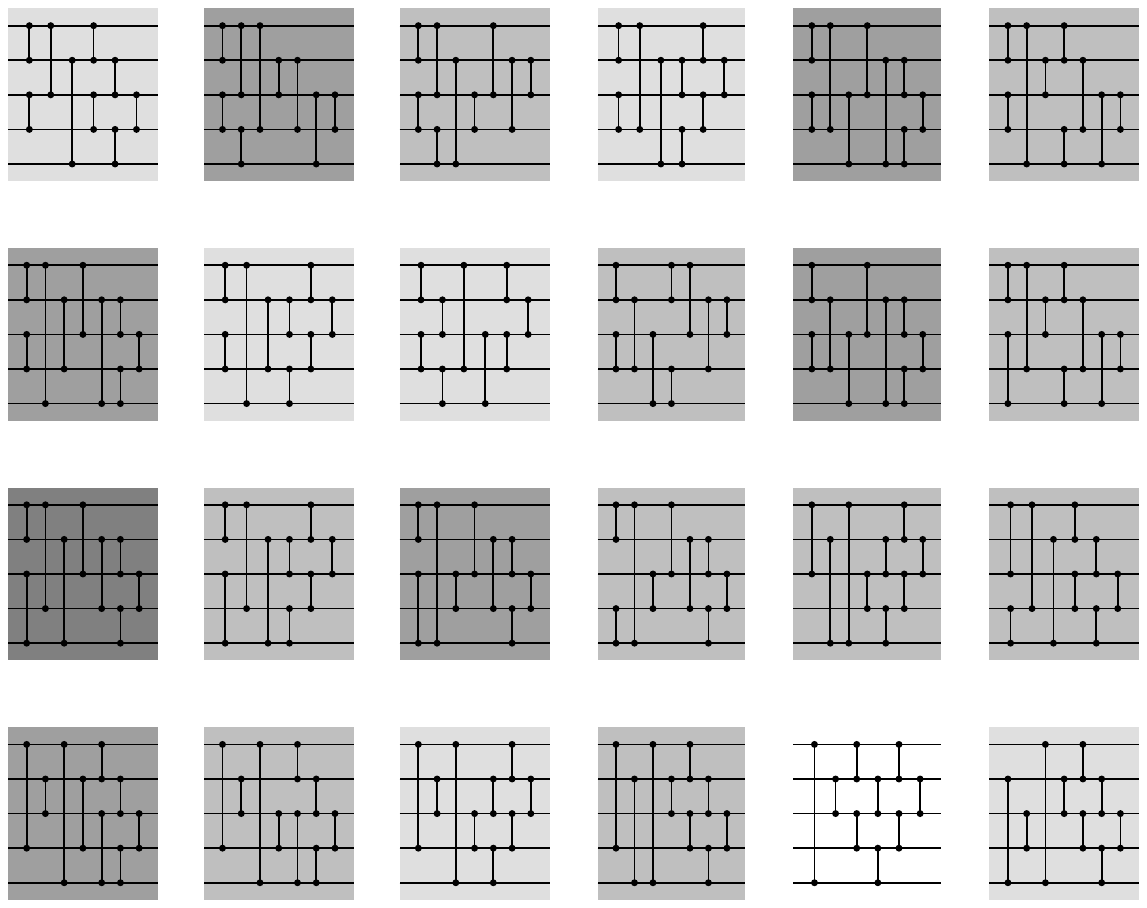}
\includegraphics[trim = 45mm 90mm 45mm 90mm, clip, keepaspectratio, width=.49\textwidth]{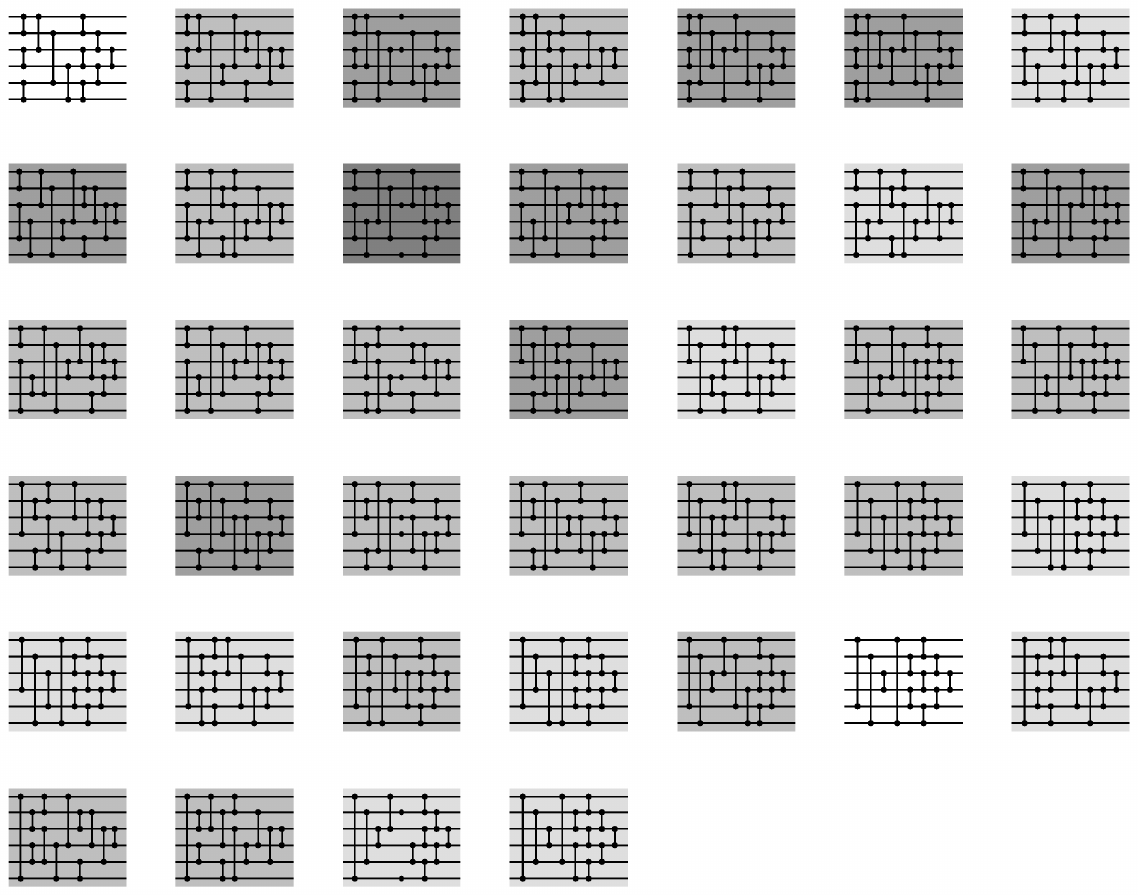}
\caption[Topology of optimal sorting network algorithms on five and six inputs]{Representative sorting networks for $n=5$ (left) and $n=6$ (right) shaded by $-\chi$. Reuse of comparators and symmetry turn out to be signaled by lower (= paler) values.}
\label{fig:sorting56}
\end{figure}


Meanwhile, Figures \ref{fig:mm} and \ref{fig:MatrixMultiplyAlgorithms} give a sense of how matrix multiplication algorithms cluster in meaningful ways when the Betti numbers for Dowker homology are used as features. By computing homologies over local windows of instructions/assignments/line numbers (Figures \ref{fig:naiveThree2}-\ref{fig:Makarov}), we obtain detailed structurally aware features evocative of spectrograms.

\begin{figure}[htb]
\includegraphics[trim = 20mm 105mm 60mm 80mm, clip, keepaspectratio, width=.49\textwidth]{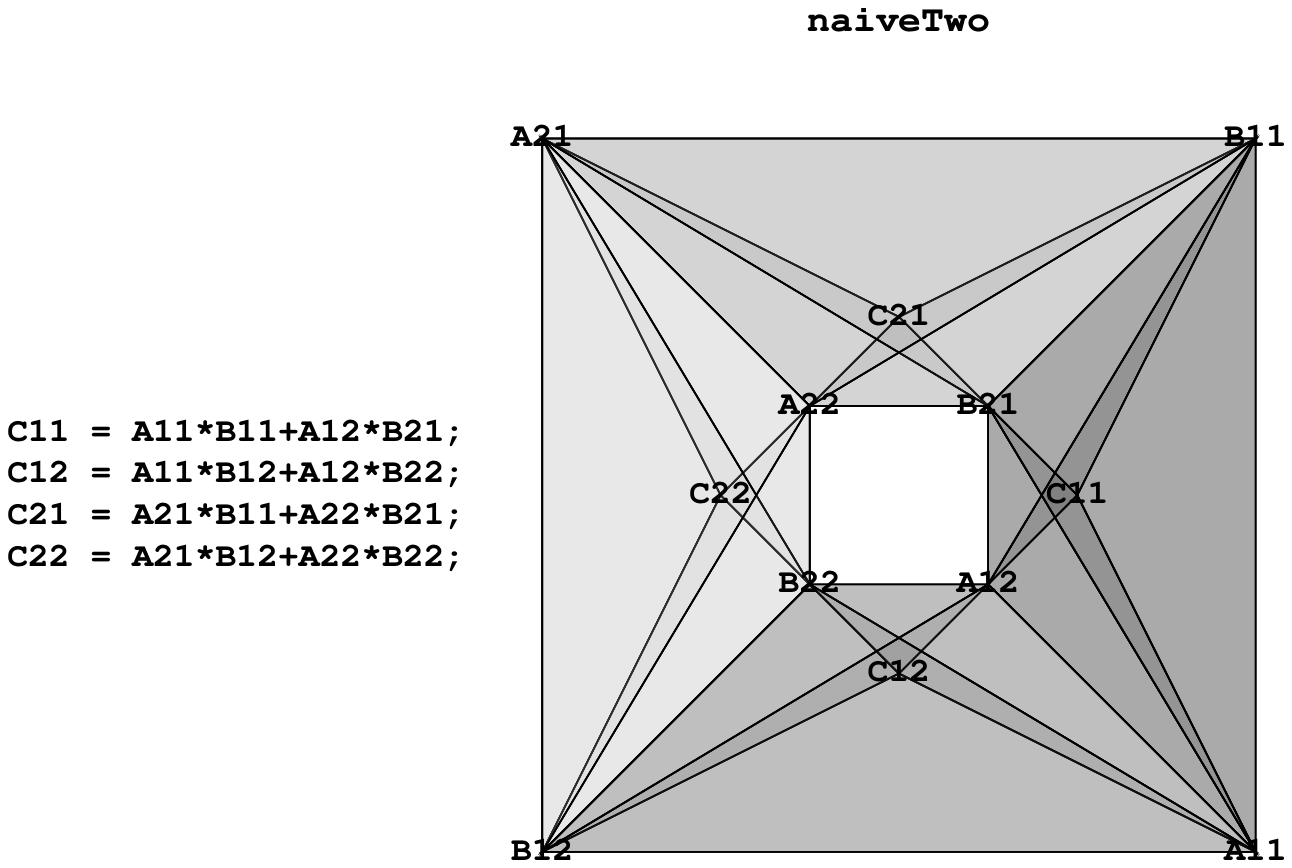}
\includegraphics[trim = 20mm 105mm 60mm 80mm, clip, keepaspectratio, width=.49\textwidth]{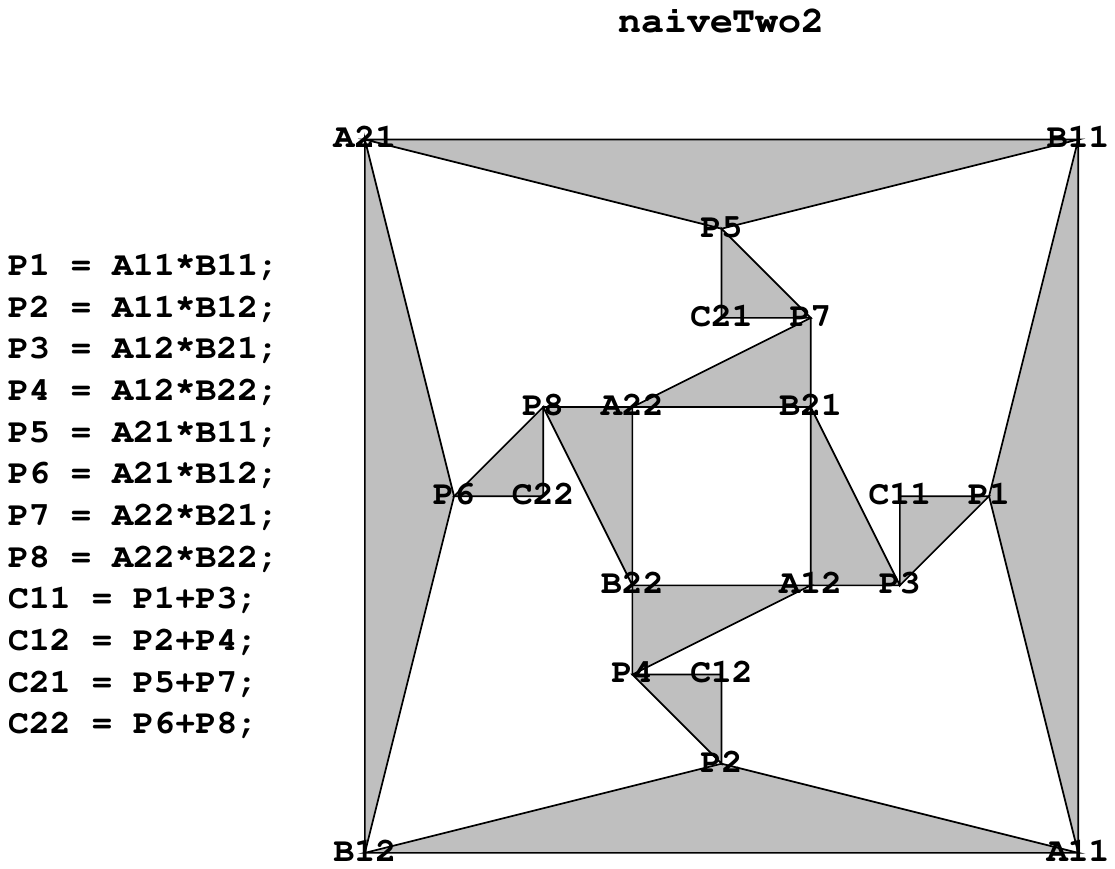}
\caption[Dowker complexes for elementary matrix multiplication algorithms]{From left to right: Dowker complexes for the na\"ive $2 \times 2$ matrix multiplication algorithm (with differently shaded simplices of top dimension), and for a ``compiled'' version.}
\label{fig:mm}
\end{figure}

\begin{figure}[htb]
\includegraphics[trim = 20mm 0mm 20mm 0mm, clip, keepaspectratio, width=\textwidth]{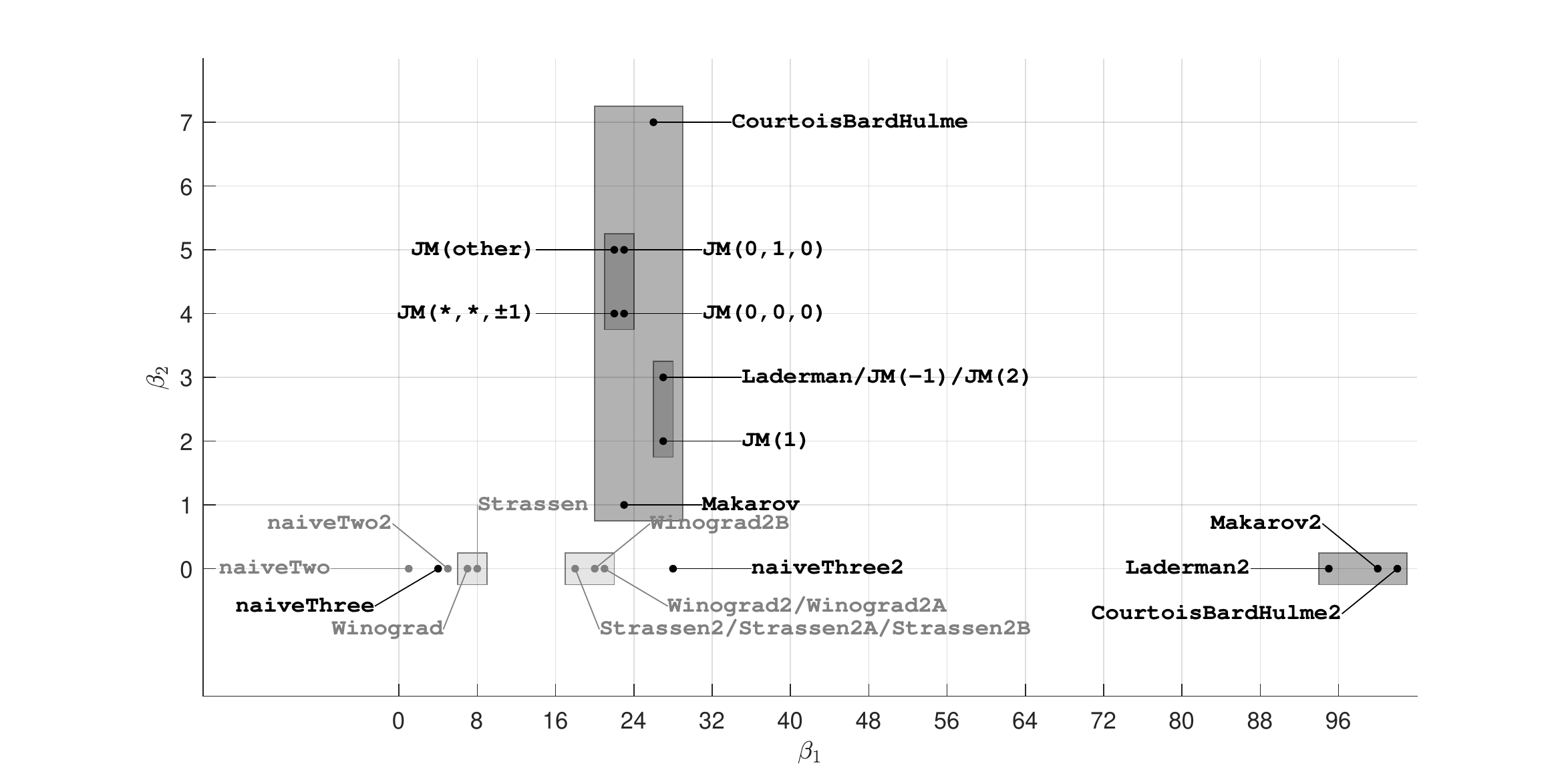}
\caption[Dowker homologies of 48 different matrix multiplication algorithms]{The Betti numbers for Dowker homology of matrix multiplication algorithms are useful features for clustering. There are respectively 18 and 7 inequivalent but similar algorithms from the three-parameter Johnson-McLoughlin family corresponding to the labels \texttt{JM(*,*,$\pm$1)} and \texttt{JM(other)}.}
\label{fig:MatrixMultiplyAlgorithms}
\end{figure}

\begin{figure}[htb]
\includegraphics[trim = 15mm 5mm 50mm 5mm, clip, keepaspectratio, width=\textwidth]{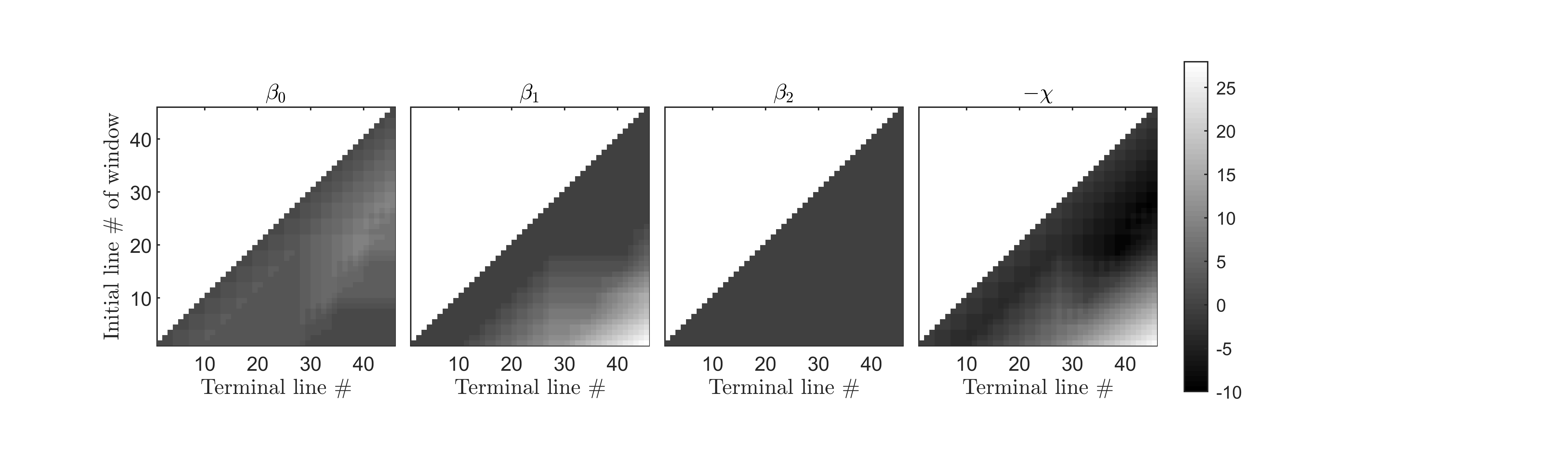}
\caption[Windowed Dowker homology for a ``compiled'' version of na\"ive $3 \times 3$ matrix multiplication]{Windowed Dowker homology for the \texttt{naiveThree2} algorithm, i.e., the ``compiled'' version of na\"ive $3 \times 3$ matrix multiplication. Structural features of lines 1-9, 2-10, \dots, 19-27 vs lines 28-45 are apparent.}
\label{fig:naiveThree2}
\end{figure}

\begin{figure}[htb]
\includegraphics[trim = 15mm 5mm 50mm 5mm, clip, keepaspectratio, width=\textwidth]{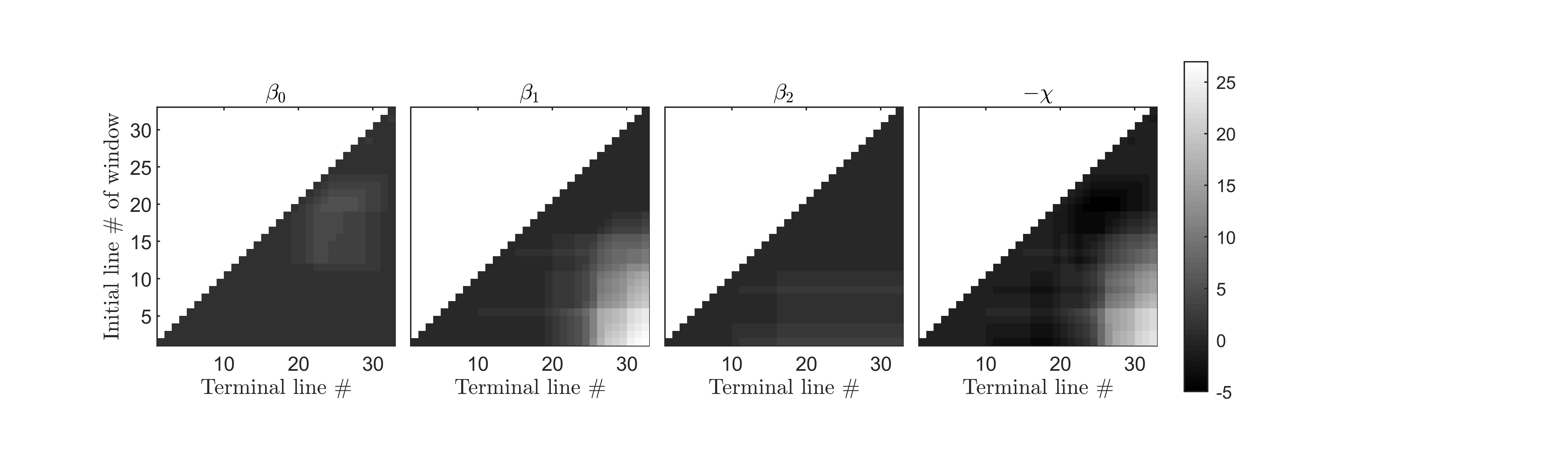}
\caption[Windowed Dowker homology for the \texttt{Laderman} $3 \times 3$ matrix multiplication algorithm]{Windowed Dowker homology for the \texttt{Laderman} $3 \times 3$ matrix multiplication algorithm.}
\label{fig:Laderman}
\end{figure}

\begin{figure}[htb]
\includegraphics[trim = 15mm 5mm 50mm 5mm, clip, keepaspectratio, width=\textwidth]{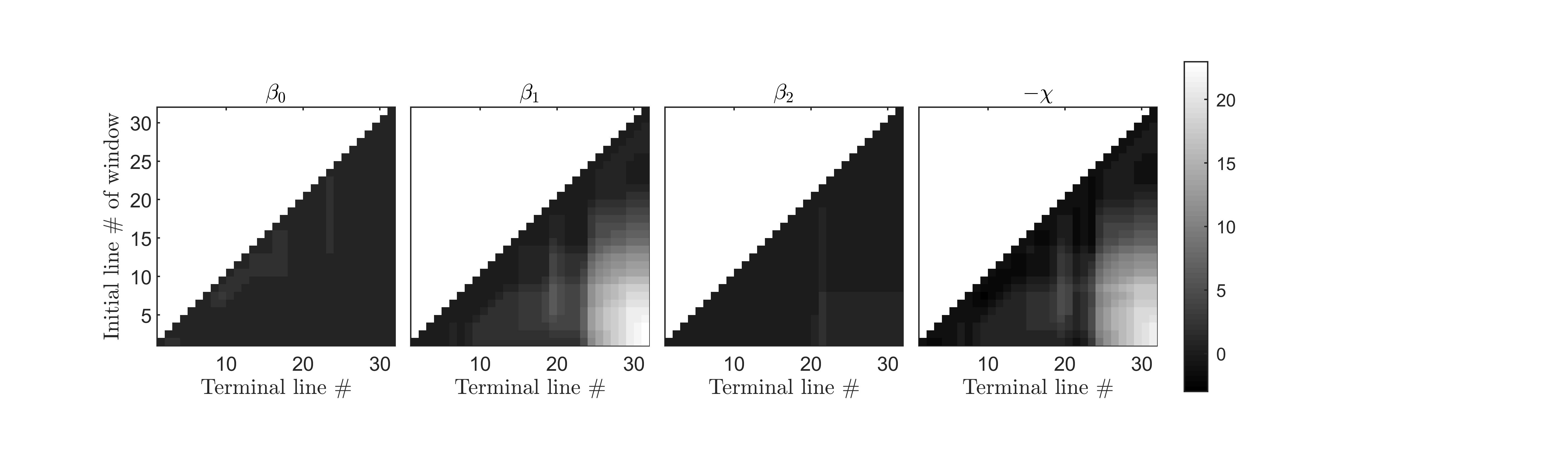}
\caption[Windowed Dowker homology for the \texttt{Makarov} $3 \times 3$ matrix multiplication algorithm]{Windowed Dowker homology for the \texttt{Makarov} $3 \times 3$ matrix multiplication algorithm. Lines 13-15 of the algorithm correspond to a local maximum in complexity. These lines embody a nontrivial homology class in dimension 1 corresponding to \texttt{A12}, \texttt{A22}, and \texttt{B23} that is isolated from the impact of lines 12 and 16 (i.e., there are no shared variables).
Lines 8-19 of the Makarov algorithm turn out to correspond to a local extremum in $\chi$ which is apparent in a thresholded version of this figure. The corresponding simplicial complex has 6 holes and 1 void (or “bubble”) that is not practical to visualize directly.}
\label{fig:Makarov}
\end{figure}


Finally, we can apply this same sort of construction at the binary level. In Figure \ref{fig:REIL}, we show a snippet of Reverse Engineering Intermediate Language (REIL) \cite{Dullien2009} code, the corresponding abstract simplicial complex (accounting for memory locations in a natural way), and the corresponding ``spectrograms.'' By limiting the size of windows considered, this sort of feature construction can be performed in linear time (albeit with a possibly large overhead constant) and used to analyze basic blocks in disassembled binaries or their rough equivalents.

%

\begin{figure}[htb]
\begin{minipage}{.29\textwidth}
\includegraphics[trim = 15mm 140mm 120mm 15mm, clip, keepaspectratio, width=\textwidth]{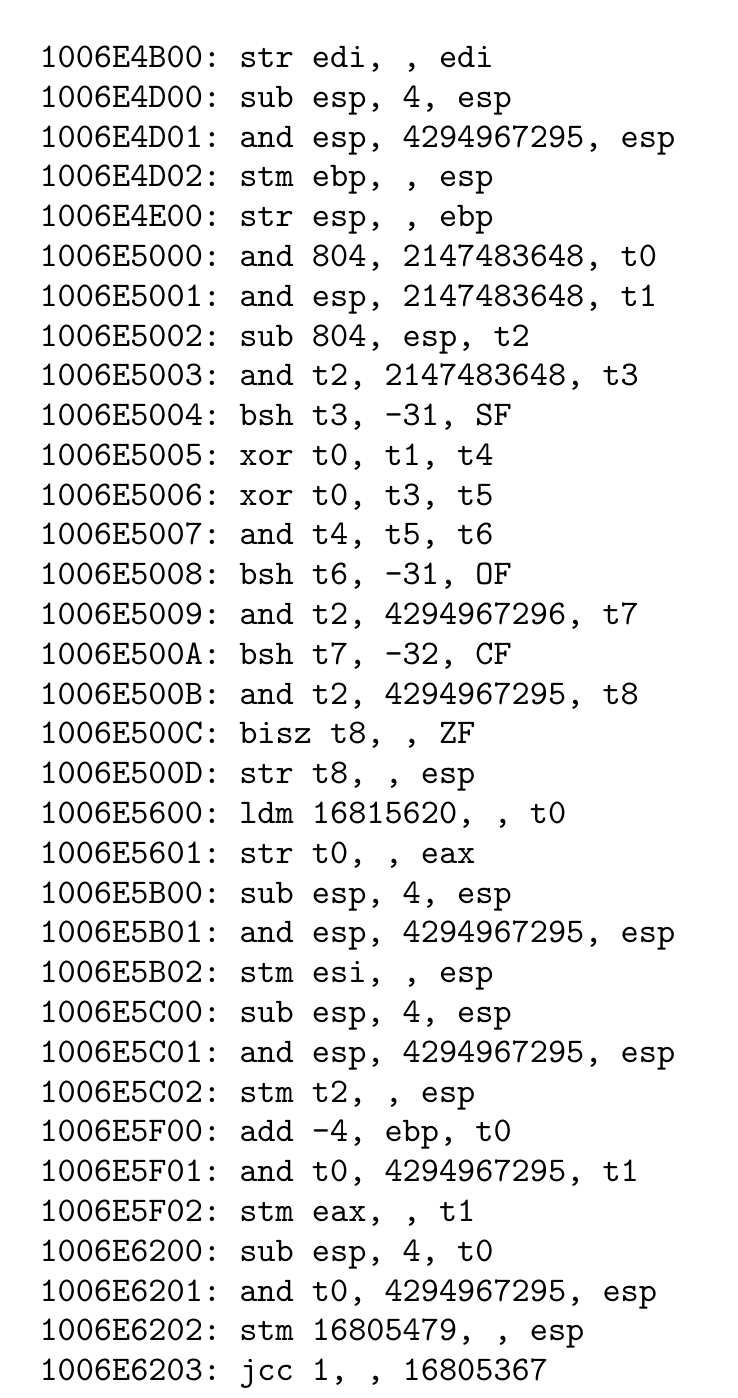}
\end{minipage}
\begin{minipage}{.69\textwidth}
\includegraphics[trim = 50mm 10mm 50mm 5mm, clip, keepaspectratio, width=\textwidth]{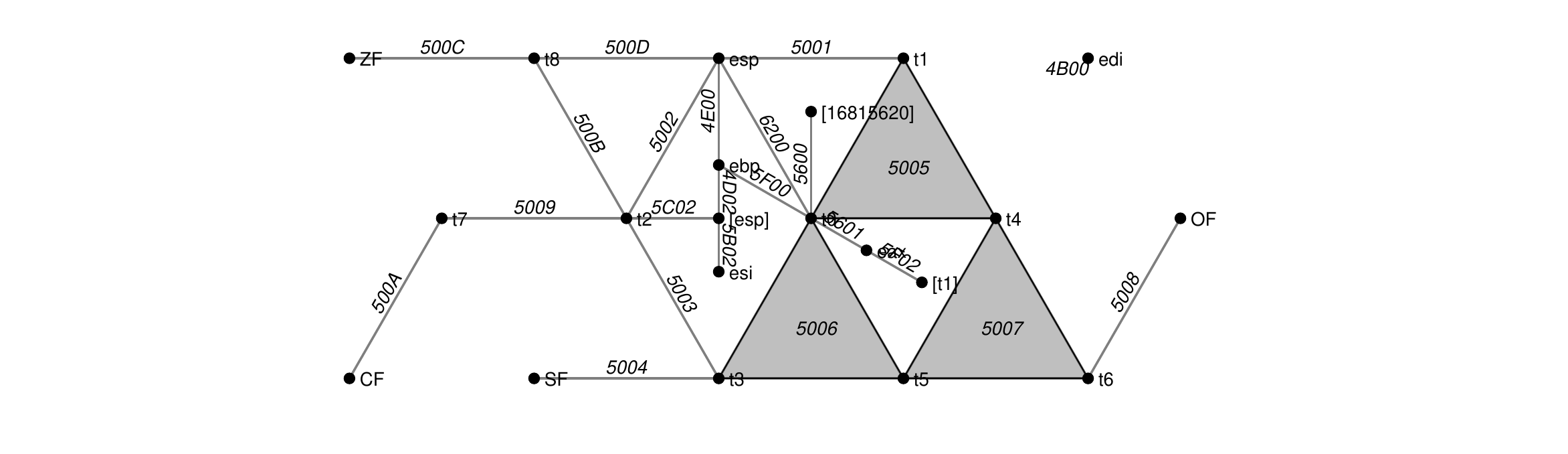} \\
\includegraphics[trim = 15mm 0mm 55mm 5mm, clip, keepaspectratio, width=\textwidth]{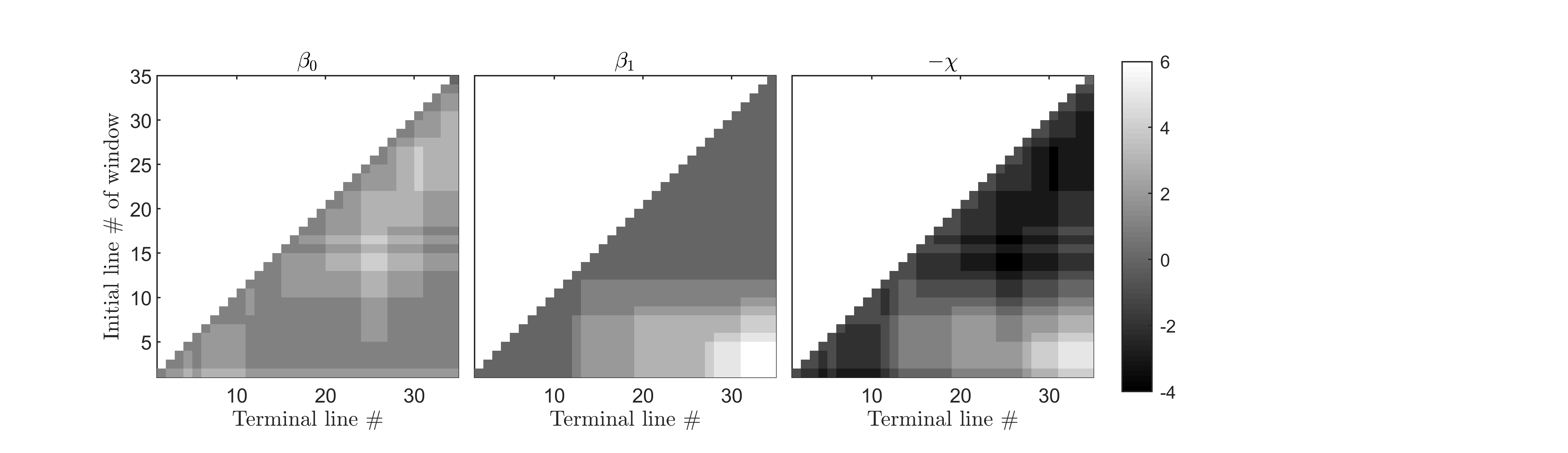}
\end{minipage}
\caption[Topology of an assembly-level basic block]{(L) Some REIL code. (R; upper) The corresponding 2-complex. (R; lower) A ``spectrogram'' of Betti numbers and Euler characteristic $\chi$ as a function of windowed code. Lines 7-13 (memory addresses ending in 5001-5007) exhibit clearly visible extrema in $\beta_1$ and $\chi$ (corner of a light rectangular region in center and right panels). Indeed, the registers \texttt{esp}, \texttt{t0}, \dots, \texttt{t6} are all involved in multiple instructions in this range, leading to a single connected component with two holes. A secondary locus of topological complexity is lines 11-13 (memory addresses ending in 5005-5007).
}
\label{fig:REIL}
\end{figure}

\section{Path homology to analyze graphical structures}\label{Path}

In this section, we introduce what turns out to be a generalization of many of the ideas in the preceding one, though we treat it on its own. Instead of computing topological invariants of shape-like data structures, we compute topological invariants of oriented path-like data structures. Because these are ubiquitous in cyber applications, we do not attempt an exhaustive treatment, but instead limit ourselves to sketching an application to the control flow of computer programs.

We outline \emph{path homology}\index{homology!path} as treated in \cite{Grigoryan2012,Chowdhury2018}. For additional background on path homology, see the series of papers \cite{Grigoryan2014,Grigoryan2014b,Grigoryan2015,Grigoryan2017,Grigoryan2018,Grigoryan2018b}.

For convenience, we replace the chain complex \eqref{eq:chainComplex} with its \emph{reduction}
\begin{equation}
\label{eq:reduction}
\dots C_{p+1} \overset{\partial_{p+1}}{\longrightarrow} C_p \overset{\partial_p}{\longrightarrow} C_{p-1} \overset{\partial_{p-1}}{\longrightarrow} \dots \overset{\partial_1}{\longrightarrow} C_0 \overset{\tilde \partial_0}{\longrightarrow} \mathbb{F} \longrightarrow 0
\end{equation}
which (using an obvious notational device and assuming the original chain complex is nondegenerate) has the minor effect $\tilde H_0 \oplus \mathbb{F} \cong H_0$, while $\tilde H_p \cong H_p$ for $p > 0$. Similarly, $\tilde \beta_p = \beta_p - \delta_{p0}$, where $\delta_{jk} = 1$ if and only if $j = k$ and $\delta_{jk} = 0$ otherwise.

For a loopless digraph $D = (V,A)$, the set $\mathcal{A}_p(D)$ of \emph{allowed $p$-paths} is
\begin{equation}
\label{eq:allowed}
\{(v_0,\dots,v_p) \in V^{p+1} : (v_{j-1},v_j) \in A, 1 \le j \le p\}.
\end{equation}
As a convention, we set $\mathcal{A}_{0} := V$, $V^0 \equiv \mathcal{A}_{-1} := \{0\}$ and $V^{-1} \equiv \mathcal{A}_{-2} := \emptyset$. For a field $\mathbb{F}$
\footnote{
Path homology can be defined over rings as well. This definition gives additional power: M. Yutin has exhibited digraphs on as few as six vertices that have torsion.
}
and a finite set $X$, let $\mathbb{F}^X \cong \mathbb{F}^{|X|}$ be the free $\mathbb{F}$-vector space on $X$, with the convention $\mathbb{F}^\emptyset := \{0\}$. The \emph{non-regular boundary operator} $\partial_{[p]} : \mathbb{F}^{V^{p+1}} \rightarrow \mathbb{F}^{V^p}$ is the linear map acting on the standard basis as
\begin{equation}
\label{eq:preboundary}
\partial_{[p]} e_{(v_0,\dots,v_p)} = \sum_{j=0}^p (-1)^j e_{\nabla_j (v_0,\dots,v_p )}.
\end{equation}
It is not hard to verify that $\partial_{[p-1]} \circ \partial_{[p]} \equiv 0$, so $(\mathbb{F}^{V^{p+1}},\partial_{[p]})$ is a chain complex.

Path homology is obtained from a different chain complex derived from the immediately preceding one. Set
\begin{equation}
\label{eq:invariant}
\Omega_p := \left \{ \omega \in \mathbb{F}^{\mathcal{A}_{p}} : \partial_{[p]} \omega \in \mathbb{F}^{\mathcal{A}_{p-1}} \right \},
\end{equation}
$\Omega_{-1} := \mathbb{F}^{\{0\}} \cong \mathbb{F}$, and $\Omega_{-2} := \mathbb{F}^{\emptyset} = \{0\}$. We have that $\partial_{[p]} \Omega_p \subseteq \mathbb{F}^{\mathcal{A}_{p-1}}$, so $\partial_{[p-1]} \partial_{[p]} \Omega_p = 0 \in \mathbb{F}^{\mathcal{A}_{p-2}}$ and $\partial_{[p]} \Omega_p \subseteq \Omega_{p-1}$. We can therefore define the \emph{(non-regular) path complex} of $D$ as the chain complex $(\Omega_p,\partial_p)$, where $\partial_p := \partial_{[p]}|_{\Omega_p}$.
\footnote{
The implied \emph{regular path complex} prevents a directed 2-cycle from having nontrivial 1-homology. While \cite{Grigoryan2012} advocates regular path homology, in our view non-regular path homology is simpler, richer, and more likely useful in applications.
}
The homology of this path complex is the \emph{(non-regular) path homology} of $D$.

For example, consider the digraphs $D_1$ and $D_2$ in Figure \ref{fig:phDigraphs}.  $\mathcal{A}_1(D_1)$ and $\mathcal{A}_1(D_2)$ are given by the directed edges, $\mathcal{A}_2(D_2) = \emptyset$, and  $\mathcal{A}_2(D_2) = \{(w,x,z), (w,y,z)\}$. Now $\partial_{[2]}e_{(w,x,z)} = e_{(x,z)} - e_{(w,z)} + e_{(w,x)} \not\in \mathbb{F}^{\mathcal{A}_1(D_2)}$ and $\partial_{[2]}e_{(w,y,z)} = e_{(y,z)} - e_{(w,z)} + e_{(w,z)} \not\in \mathbb{F}^{\mathcal{A}_1(D_2)}$ (because the edge $w \rightarrow z$ is missing), so
\begin{align}
\partial_{[2]}(e_{(w,x,z)} - e_{(w,y,z)}) & = e_{(x,z)} - e_{(w,z)} + e_{(w,x)} - e_{(y,z)} + e_{(w,z)} - e_{(w,y)} \nonumber \\ 
& = e_{(x,z)} + e_{(w,x)} - e_{(y,z)} - e_{(w,y)} \in \mathbb{F}^{\mathcal{A}_1(D_2)}. \nonumber
\end{align}
Consequently the dimensions of the path homology vector spaces (denoted by the Betti numbers $\beta$) are different: $\beta_1(D_1) = 1$ and $\beta_1(D_2) = 0$. 

\begin{figure}[htb]
\centering
\includegraphics[trim =100mm 130mm 90mm 125mm, clip, keepaspectratio, width=.2\textwidth]{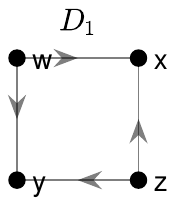}
\includegraphics[trim =100mm 130mm 90mm 125mm, clip, keepaspectratio, width=.2\textwidth]{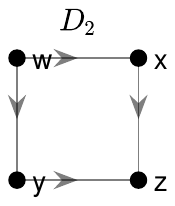}
\caption[Similar digraphs with different path homologies]{The digraph $D_1$ has trivial path homology but the digraph $D_2$ does not.}
\label{fig:phDigraphs}
\end{figure}


The ubiquity of digraphs in the cyber domain suggests that path homology can find a multitude of applications, and we briefly mention a few. 

Figure \ref{fig:intel_cfg} shows a control flow graph with nontrivial path homology in dimension two. It turns out that it is possible to construct control flow graphs (at the assembly level) with arbitrary path homology, and experiments suggest that path homology generalizes cyclomatic complexity in a way that can detect unstructured control flow \cite{Huntsman2020}.
\begin{figure}[htbp]
\begin{center}
\includegraphics[trim = 0mm 0mm 0mm 0mm, clip, width=.875\textwidth,keepaspectratio]{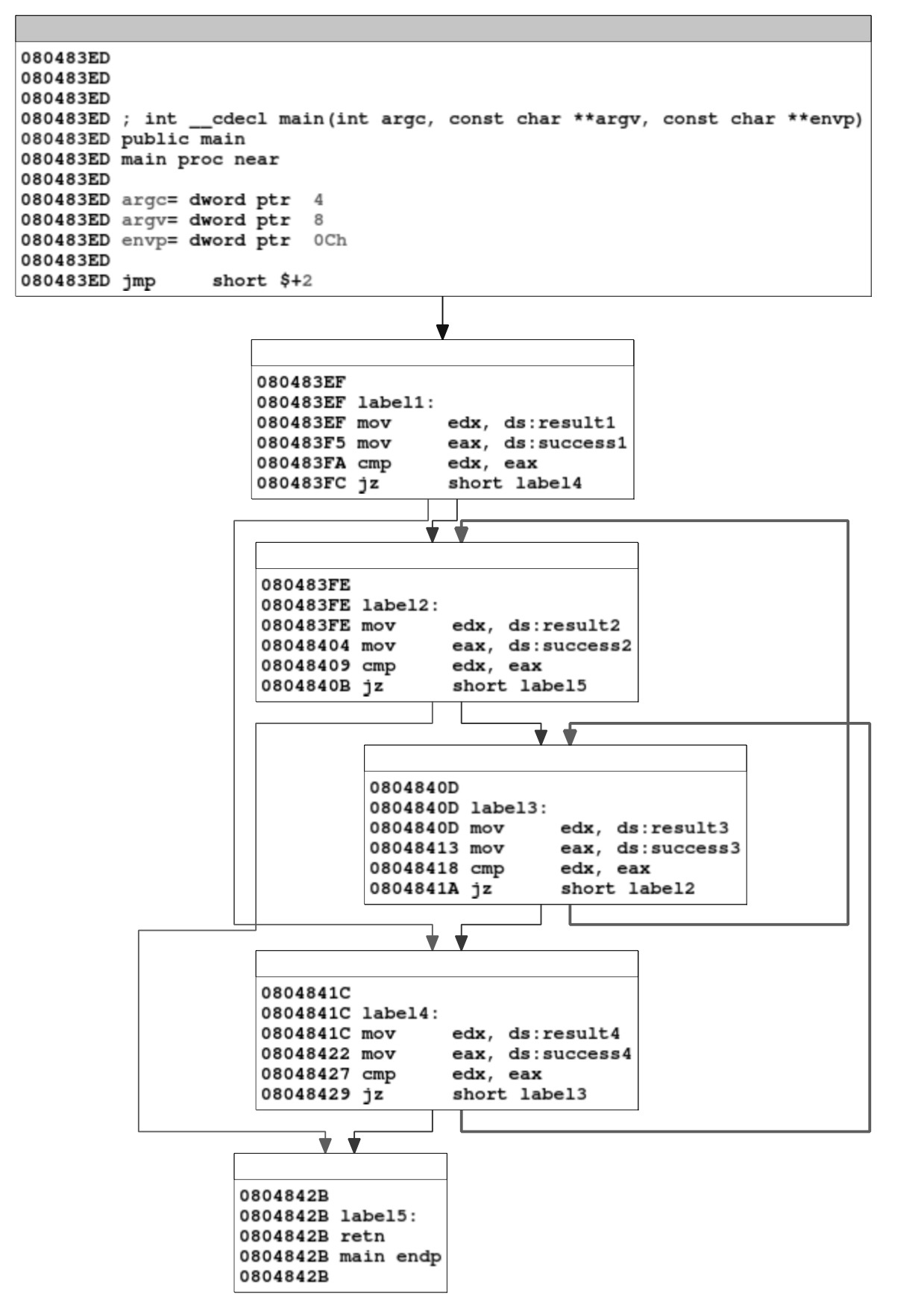}
\end{center}
\caption[A control flow graph with nontrivial path homology in dimension two]{A control flow graph with $\tilde \beta_\bullet = (0,1,1,0,\dots)$, obtained by disassembly in IDA Pro \cite{Eagle2011}. The binary is directly compiled from C source (albeit with \texttt{goto}s and inline assembly). The common instruction motif in most of the basic blocks clearly indicates how to construct binaries with essentially arbitrary control flow. Note that inserting operations without control flow (e.g., arithmetic operations in the instruction set) and reindexing memory addresses at various points would leave the control flow unaffected.
}
\label{fig:intel_cfg} 
\end{figure} %
The proof that control flow graphs can exhibit arbitrary path homology follows from a result of \cite{Chowdhury2019}, which itself has more direct applications to the characterization of neural networks. 

Meanwhile, the first author's analyses of UK and global air transportation networks (to be reported in a forthcoming paper) suggest that changes in the path homology of ``backbone'' digraphs (obtained by retaining only arcs corresponding to passenger volume above a threshold) as a function of the backbone threshold are strongly correlated with measures such as betweenness centrality. That is, path homology may provide network metrics that simultaneously complement and correlate with existing metrics.

\section{Topological data analysis and unsupervised learning in one dimension}\label{TDA1D}

In this section, we sketch the basic ideas of the rapidly expanding field of topological data analysis by considering a simple application in one dimension that simultaneously advances the state of the art in the fundamental area of nonparametric statistical estimation and avoids much of the technical baggage of persistent homology.

\emph{Topological data analysis}\index{topological data analysis} (TDA) has had a profound effect on data science and statistics over the last 15 years. Perhaps the most widely recognized and utilized tool in TDA is \emph{persistent homology}\index{persistent homology} \cite{zomorodian2005topology,Ghrist_2008,Carlsson2009,Edelsbrunner2010,GhristEAT,Oudot2015}. The basic idea (Figure \ref{fig:TDA}) is to associate an inclusion-oriented family (i.e., a \emph{filtration}\index{filtration}) of simplicial complexes to a point set in a metric space. Each simplicial complex in the filtration is formed by considering the intersections of balls of a fixed radius about each data point. As the radius varies, different simplicial complexes are produced, and their homologies are computed. 

\begin{figure}[htb]
\includegraphics[trim = 15mm 43mm 15mm 27mm, clip, keepaspectratio, width=\textwidth]{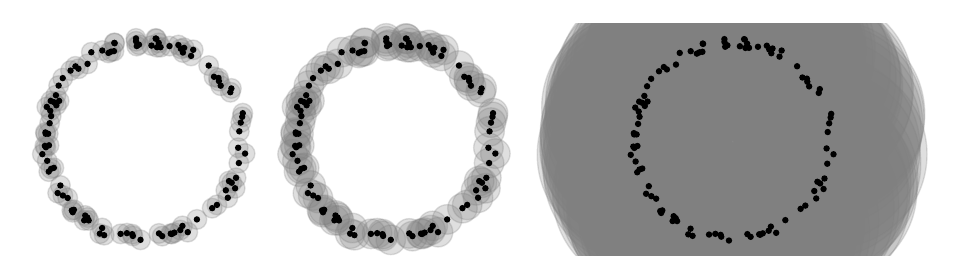}
\caption[Topological persistence]{The topology of a data set can be probed at different scales. Here, we consider a sample of 100 uniformly distributed points in a thin annulus about the unit circle. From left to right, we place disks of radius $0.1$, $0.15$, and $0.95$ around each point. The topology of the data set is morally that of a circle, and the (persistent) homology of simplicial complexes formed from the intersections of disks reveals this: a 1-homology class ``persists'' over an interval slightly bigger than $[.15, .95]$.}
\label{fig:TDA}
\end{figure}

Although the theory of topological persistence involves a considerable amount of algebra for bookkeeping associated to the ``births'' and ``deaths'' of homology classes as a function of the radius/filtration parameter, in practice simply treating the Betti numbers as functions of that parameter gives considerable information. Along similar lines, we can consider how other topological invariants behave as a function of scale. 

\begin{figure}[htb]
\begin{center}
\includegraphics[trim = 40mm 109mm 40mm 100mm, clip, keepaspectratio, width=.48\textwidth]{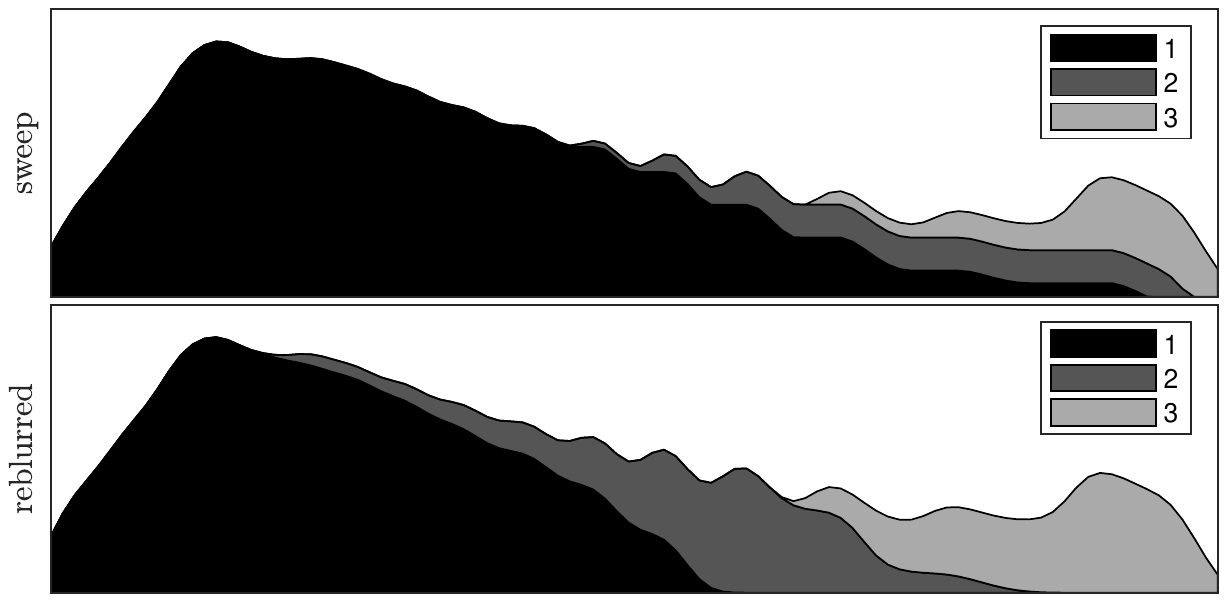}
\includegraphics[trim = 40mm 109mm 40mm 100mm, clip, keepaspectratio, width=.48\textwidth]{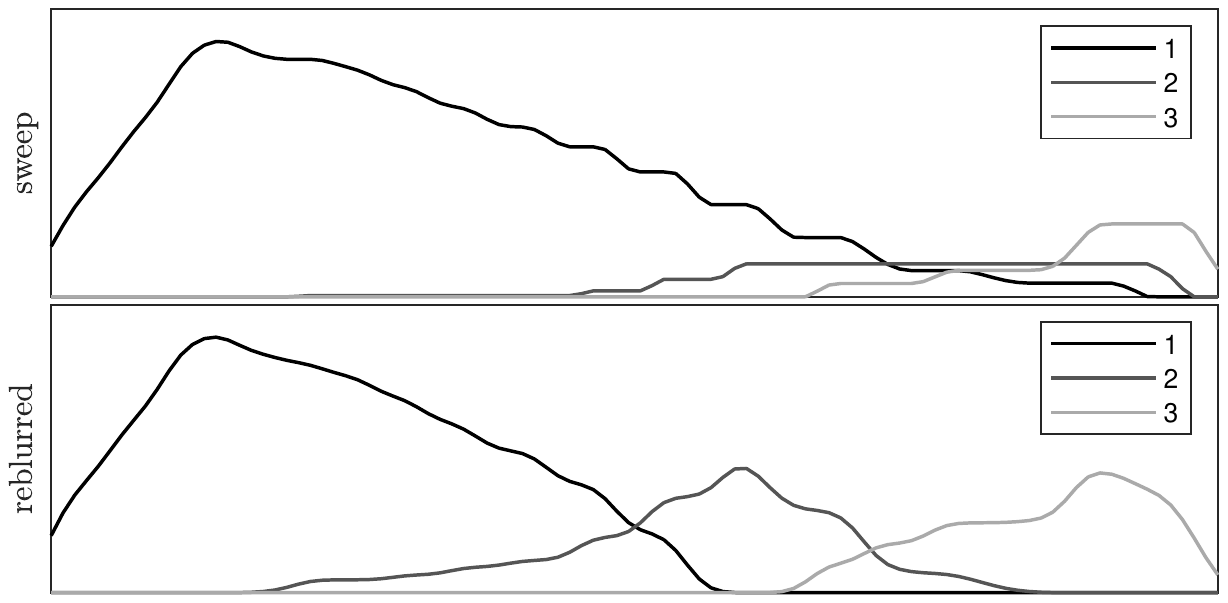}
\end{center}
\caption[Topological mixture estimation]{Topological mixture estimation. Left panels: area plots of (top) initial and (bottom) information-theoretically optimized unimodal decompositions of an estimated probability distribution. Right panels: line plots of the same decompositions. The bandwidth for the kernel density estimate \footnote{A kernel density estimate is a sort of ``smooth histogram'' that represents sample data by an average of copies of a ``kernel'' probability distribution centered around the data points. The bandwidth of a kernel density estimate is a scaling factor for the kernel: small bandwidths scale the kernel to be narrow and tall, and large bandwidths scale the kernel to be wide and short.} for the distribution and the number of unimodal mixture components are both determined using the same topological considerations.}
\label{fig:TME}
\end{figure}

Call $\phi : \mathbb{R}^n \rightarrow [0,\infty)$ \emph{unimodal}\index{unimodal} if $\phi$ is continuous and the excursion set $\phi^{-1}([y,\infty))$ is contractible (i.e., homotopy equivalent to a point) for all $0 < y \le \max \phi$. For $n = 1$, contractibility means that these excursion sets are all intervals, which coincides with the intuitive notion of unimodality. For $f : \mathbb{R}^n \rightarrow [0,\infty)$ sufficiently nice, define the \emph{unimodal category}\index{unimodal!category} of $f$ to be the smallest number $M$ of functions such that $f$ admits a \emph{unimodal decomposition}\index{unimodal!decomposition} of the form $f = \sum_{m=1}^M \pi_m \phi_m$ for some $\pi > 0$, $\sum_m \pi_m = 1$, and $\phi_m$ unimodal \cite{GhristEAT}. 

The unimodal category is a topological (homeomorphism) invariant and a ``sweep'' algorithm due to Baryshnikov and Ghrist efficiently produces a unimodal decomposition in $n = 1$. \footnote{The case $n = 2$ is still beyond the reach of current techniques, and only partial results are known. Moreover, for $n$ sufficiently large, there is provably no algorithm for computing the unimodal category!} As Figure \ref{fig:TME} demonstrates, the unimodal category can be much less than the number of extrema.

The unimodal category of a kernel density estimate for a probability distribution can be used to select an appropriate bandwidth for sample data and, as shown in Figure \ref{fig:TME}, to decompose the resulting estimated distribution into well-behaved unimodal components \emph{using no externally supplied parameters whatsoever} \cite{Huntsman2018}. The key ideas behind \emph{topological mixture estimation}\index{topological mixture estimation} are to identify the most common unimodal category as a function of bandwidth and to exploit convexity properties of the mutual information between the mixture weights and the distribution itself. The result is an extremely general (though also computationally expensive) unsupervised learning technique in one dimension that can, e.g. automatically set thresholds for anomaly detectors or determine the number of clusters in data (by taking random projections).

\section{Critical node detection in wireless networks using sheaves}\label{Sheaves}


The abstract simplicial complex tools developed in the previous sections of this chapter can also be applied to understand the structure of wireless communication networks.  As before, the combinatorial nature of such a network aligns neatly with the combinatorial structure of an abstract simplicial complex.  Qualitative intuition about how the network responds to stress can be transformed into quantitative analytic tools using the topology of these simplicial complexes.

When a carrier sense multiple access/collision detection (CSMA/CD) media access model is used in a wireless network, only one node in a given vicinity can transmit while the others must wait.  Although the physical layer protocols of wireless networks can be quite complex, the basic topology of the network plays an important role in determining network performance.  This section addresses the problem of identifying critical nodes and links within a network by using local invariants derived from the local topology of the network.  Recognizing that although protocol plays an important role, we are specifically concerned with those effects that are \emph{protocol independent}.

This section provides theoretical justification for the ``right'' local neighborhood in a wireless network with a CSMA/CD media access model using the structure of network activation patterns, and then validates the resulting topological invariants using simulated network traffic generated with {\tt ns2}.  

\subsection{Historical context and contributions}

Graph theory methods have been used extensively (for instance \cite{Nandagopal_2000,Yang_2002,Jain_2003,Lee_2007}) for identifying critical nodes in a network that carry a disproportionate amount of traffic.  However, direct application of graph theory to locate these nodes is computationally expensive \cite{DiSumma_2011,dinh2012}.  Furthermore, graphs are better suited to \emph{wired} networks and don't necessarily address the multi-way interactions inherent in wireless networks \cite{Chiang_2007}.

We can extend the ideas discussed earlier in this chapter to wireless networks by using higher-dimensional abstract simplicial complexes instead of graph connectivity as a measure of network health.  Although connectivity can be a useful measure of health \cite{Noubir_2004,Gueye_2010}, it is rather coarse.  We remedy this with a more systematic study of an 802.11b wireless network using the {\tt ns2} network simulator \cite{nsnam}.

\subsection{Interference from a transmission}

One of the main differences between a wired and a wireless communication network is the prevalence of interference on shared channels.  Channels that are shared by more users or nodes are more likely to be congested.  An abstract simplicial complex called the interference complex can model the shared channel usage within a wireless network, and forms the basis of its topological analysis.

Let a wireless network consist of a single channel, with nodes $N=\{n_1,n_2, \dotsc, n_i, \dotsc\}$ in a region $R$.  Associate an open set $U_i \subset R$ to each node $n_i$ that represents its \emph{transmitter coverage region}.  For each node $n_i$, a continuous function $s_i : U_i \to \mathbb{R}$ represents its \emph{signal level} at each point in $U_i$.  Without loss of generality, we assume that there is a global threshold $T$ for accurately decoding the transmission from any node.  In \cite{RobinsonGlobalSIP2014}, two abstract simplicial complex models were developed: the \emph{interference} and \emph{link} complexes.

\begin{definition}
The \emph{interference complex}\index{interference complex} is the abstract simplicial complex $I=I(N,U,s,T)$ consisting of all subsets of $N$ of the form $[n_{i_1}, \dotsc, n_{i_m}]$ for which $U_{i_1} \cap \dotsb \cap U_{i_m}$ contains a point $x \in R$ for which $s_{i_k}(x) > T$ for all $k=1, \dotsb m$.  
\end{definition}
The vertices of the interference complex are the nodes $N$ of the network.  There is a simplex for each list of transmitters that when transmitting will result in at least one mobile receiver location receiving multiple signals simultaneously.  (The interference complex is a \v{C}ech complex \cite{GhristEAT, Hatcher_2002}.)

\begin{proposition}
Each facet of the interference complex corresponds to a maximal collection of nodes that mutually interfere.
\end{proposition}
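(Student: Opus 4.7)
The plan is to unpack the two definitions and show that ``being a simplex of $I$'' coincides exactly with ``being a mutually interfering collection,'' after which the equivalence ``facet $\Leftrightarrow$ maximal such collection'' is essentially tautological.

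First I would make the terminology precise. Given the model $(N,U,s,T)$, call a subset $S = \{n_{i_1},\dots,n_{i_m}\} \subseteq N$ \emph{mutually interfering} if there exists a point $x \in U_{i_1} \cap \dotsb \cap U_{i_m}$ with $s_{i_k}(x) > T$ for every $k$; intuitively, some receiver location hears each of them above threshold. By the very definition of $I(N,U,s,T)$, this is precisely the condition for $[n_{i_1},\dots,n_{i_m}]$ to be a simplex of $I$. Thus the simplices of $I$ are in bijection with the mutually interfering subsets of $N$.

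Next I would verify the forward direction. Suppose $\sigma = [n_{i_1},\dots,n_{i_m}]$ is a facet of $I$. Then $\sigma$ is a simplex, so the underlying set is mutually interfering. If it were not maximal among mutually interfering subsets, there would exist some node $n_j \notin \sigma$ such that $\sigma \cup \{n_j\}$ is still mutually interfering, whence $[n_{i_1},\dots,n_{i_m},n_j]$ would be a simplex of $I$ strictly containing $\sigma$, contradicting the hypothesis that $\sigma$ is a facet.

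For the converse, let $S$ be a maximal mutually interfering subset of $N$. By the bijection above, $S$ corresponds to a simplex $\sigma \in I$. If $\sigma$ were not a facet, there would be a simplex $\tau \supsetneq \sigma$ in $I$; but then the set underlying $\tau$ is a mutually interfering superset of $S$, contradicting the maximality of $S$. Hence $\sigma$ is a facet.

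I do not anticipate a real obstacle here: the only substantive step is recognizing that the geometric condition ``there is a witness point $x$ in the common coverage region with signal above threshold'' is literally what ``mutually interfere'' is meant to encode, so the result is a direct rephrasing of the definition combined with the general fact that facets of an abstract simplicial complex are the inclusion-maximal simplices. If anything warrants care in the exposition, it is clarifying that ``mutually'' refers to simultaneous reception at a common point rather than pairwise interference, since pairwise overlap of the $U_i$ would be a strictly weaker condition.
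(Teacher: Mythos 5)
Your proof is correct and follows essentially the same route as the paper's: unpack the definition of the interference complex to identify simplices with mutually interfering sets, then observe that facets are exactly the inclusion-maximal simplices. If anything, your version is more complete, since the paper's proof only argues the forward direction (facet implies maximal) and leaves the converse implicit.
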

\begin{proof}
Let $c$ be a simplex of the interference complex.  Then $c$ is a collection of nodes whose coverages have a nontrivial intersection.  The decoding threshold is exceeded for all nodes at some point $x$ in this intersection.  If any two nodes in $c$ transmit simultaneously, they will interfere at $x$.  If $c$ is a facet, it is contained in no larger simplex, so it is clearly maximal.
\end{proof}

\begin{definition}
The \emph{link graph}\index{link graph} is a $1$-dimensional simplicial complex defined by the following collection of subsets of $N$:
\begin{enumerate}
\item $[n_i] \in N$ for each node $n_i$, and
\item $[n_i,n_j]\in N$ if $s_i(n_j) > T$ and $s_j(n_i) >T$.
\end{enumerate}
The \emph{link complex} $L=L(N,U,s,T)$ is the clique complex of the link graph, which means that it contains all elements of the form $[n_{i_1},\dotsc,n_{i_m}]$ whenever this set is a clique in the link graph.
\end{definition}

\begin{figure}
\begin{center}
\includegraphics[width=1.0\textwidth]{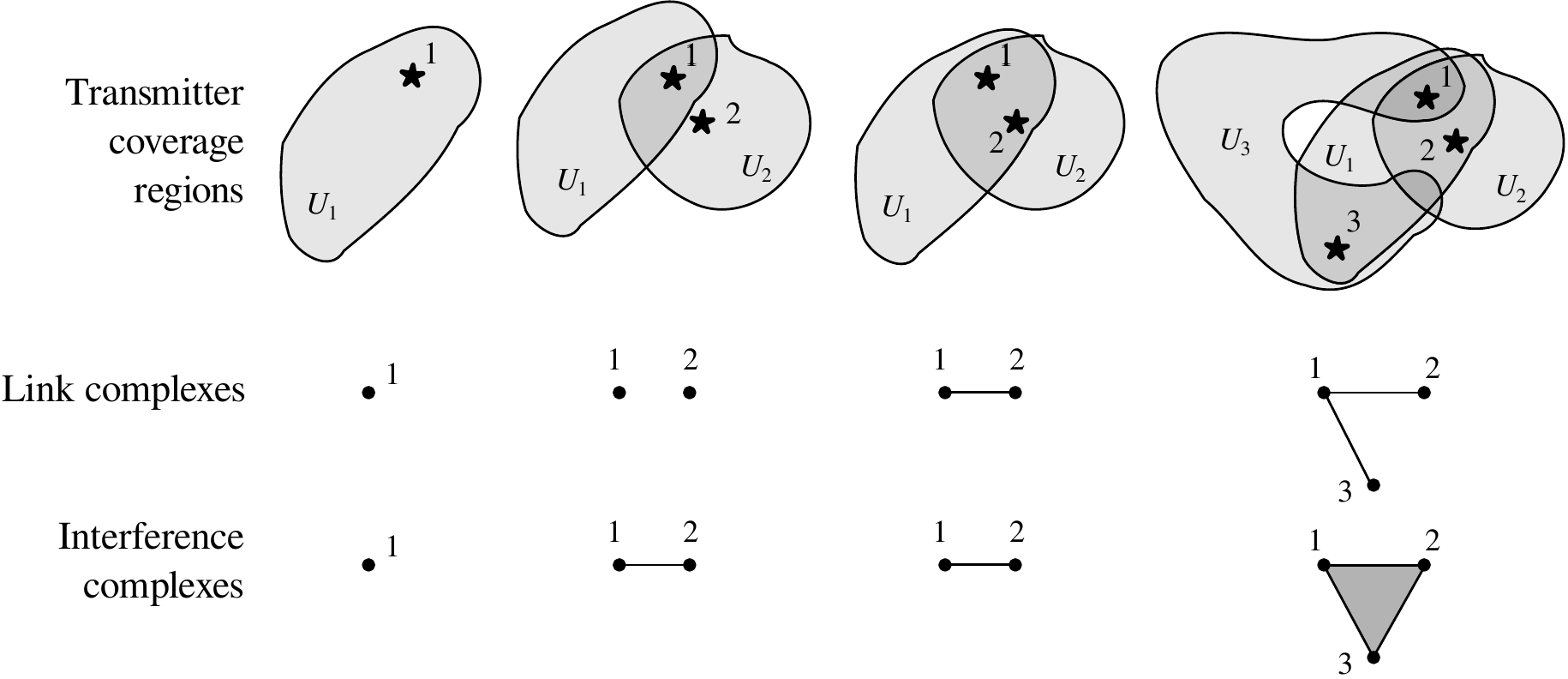}
\caption{Several transmitters marked by stars, their coverage regions (top row), their link complexes (middle row), and their interference complexes (bottom row).}
\label{fig:network_complexes}
\end{center}
\end{figure}

Figure \ref{fig:network_complexes} shows three transmitters, labeled $1$, $2$, and $3$, with their coverage regions $U_1$, $U_2$, and $U_3$ for a particular threshold $T$.  Assuming that all points within $U_i$ can receive the signal from transmitter $i$, the link complex for each configuration is shown in the middle row of Figure \ref{fig:network_complexes}.  Notice that in the second column, transmitter $1$ can receive transmitter $2$'s signal but not conversely.  This explains the absence of an edge in the link complex.  However, since there are points in the intersection between their two coverage regions, the interference complex contains an edge.  This also happens in the rightmost column, in which neither of transmitter $2$ or $3$ can receive each other's signal, but there are points where all three transmitters can be received.

\begin{proposition}
Each facet in the link complex is a maximal set of nodes that can communicate directly with one another (with only one transmitting at a time).
\end{proposition}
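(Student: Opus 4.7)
The plan is to mirror the preceding proof about the interference complex, exploiting the fact that the link complex is defined as the clique complex of the link graph. First I would let $c = [n_{i_1}, \dotsc, n_{i_m}]$ be a facet of $L$. By the definition of a clique complex, $c$ corresponds to a clique in the link graph, which means that every pair $(n_{i_j}, n_{i_k})$ with $j \neq k$ is an edge of the link graph.

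Next I would unpack the definition of the link graph: for each such pair, $s_{i_j}(n_{i_k}) > T$ and $s_{i_k}(n_{i_j}) > T$. Interpreted physically, this says that each of the two nodes lies in the other's transmitter coverage region with signal above the decoding threshold, so whenever only one of the pair transmits, the other receives an intelligible signal. Hence every pair in $c$ can communicate directly, subject to the one-at-a-time constraint inherent to CSMA/CD operation on a single channel.

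For the maximality claim, I would argue by contradiction in the style of the earlier proposition: if there were a node $n_k \notin c$ such that $c \cup \{n_k\}$ still consisted of mutually communicating nodes, then by the link-graph definition $n_k$ would form an edge with each $n_{i_j}$, producing a larger clique and hence (by the clique-complex construction) a simplex of $L$ strictly containing $c$. This contradicts the hypothesis that $c$ is a facet.

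The argument is essentially a bookkeeping exercise translating between three equivalent languages (facet of a clique complex, maximal clique of the underlying graph, and maximal set of mutually decodable transmitter pairs), so I do not expect any real obstacles; the only subtle point worth flagging is that ``communicate directly'' must be interpreted as the symmetric pairwise decoding condition built into the link graph, rather than a stronger simultaneous-transmission condition (which is instead the content of the interference complex).
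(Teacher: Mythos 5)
Your proposal is correct and follows essentially the same route as the paper: unpack the clique-complex definition of $L$, observe that each pair $n_{i_j}, n_{i_k}$ in a simplex satisfies $s_{i_j}(n_{i_k}) > T$ and $s_{i_k}(n_{i_j}) > T$, and conclude mutual direct communicability. In fact you go slightly further than the paper's own proof, which stops after the pairwise-communication step and never explicitly argues maximality; your contradiction argument (a larger mutually-communicating set would be a larger clique, hence a larger simplex, contradicting facet-hood) supplies that missing piece cleanly.
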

\begin{proof}
Let $c$ be a simplex of the link complex.  By definition, for each pair of nodes, $i,j\in c$ implies that $s_i(n_j) > T$ and $s_j(n_i) >T$.  Therefore, $i$ and $j$ can communicate with one another.  
\end{proof}

\begin{corollary}
Facets of the link complexes represent common broadcast resources.
\end{corollary}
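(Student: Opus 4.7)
The plan is to derive the corollary as a direct interpretive consequence of the preceding proposition, noting that the mathematical content is essentially already in hand; the main work is in explicating what ``common broadcast resource'' means operationally within the CSMA/CD model.

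First, I would make the notion of a broadcast resource precise. In a CSMA/CD setting, a set of nodes constitutes a shared broadcast resource when (i) a single transmission from any member is correctly decoded by every other member, and (ii) at most one member transmits at any given time. Condition (ii) is enforced by the media access protocol, so the topological content lies entirely in condition (i).

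Next, I would apply the preceding proposition to a facet $c$ of the link complex. By definition of the link graph, for every pair $n_i, n_j \in c$ we have $s_i(n_j) > T$ and $s_j(n_i) > T$, so each $n_i \in c$ can broadcast a message that is decoded by every other node in $c$. Thus $c$ functions as a common broadcast medium: any of its nodes may occupy the channel, and the transmission reaches the entire set.

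Finally, I would invoke facet maximality to argue that this broadcast resource is identified canonically rather than as a sub-resource of a larger one. If $n_k \notin c$ could be added while preserving mutual reception with every node of $c$, then $c \cup \{n_k\}$ would be a clique in the link graph and hence a simplex of the link complex, contradicting that $c$ is a facet. Therefore each facet corresponds to a maximal shared broadcast resource, and the corollary follows.

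The main obstacle is not technical — the inequalities come straight from the definitions — but rather definitional: making the phrase ``common broadcast resource'' crisp enough that the equivalence with maximal link-complex facets is unambiguous. Once the reception/maximality dichotomy above is laid out, the corollary is essentially a restatement of the proposition phrased in network-operational, rather than graph-theoretic, language.
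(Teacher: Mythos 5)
Your proposal is correct and matches the paper's intent: the paper offers no separate argument for this corollary, treating it as an immediate reinterpretation of the preceding proposition (a facet is a maximal mutually-communicating set, hence a maximal shared broadcast medium), which is exactly the reading you give. Your explicit maximality step (adjoining $n_k$ would yield a larger clique, contradicting facetness) just makes precise what the paper leaves implicit.
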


Since the CSMA/CD protocol is implemented locally, it can be modeled as follows:

\begin{definition}\index{activation sheaf}
Suppose that $X$ is a simplicial complex (such as an interference or link complex) whose set of vertices is $N$.  Consider the following assignment $\mathcal{A}$ of additional information to capture which nodes are transmitting and decodable:
\begin{enumerate}
\item To each simplex $c\in X$, assign the set
\begin{eqnarray*}
\mathcal{A}(c)&=&\{n \in N : \text{there exists a simplex } d\in X \text{ with }\\
&& c \subset d \text{ and } n\in d\}\cup\{\perp\}
\end{eqnarray*} 
of nodes that have a coface in common with $c$, along with the symbol $\perp$.  We call $\mathcal{A}(c)$ the \emph{stalk} of $\mathcal{A}$ at $c$.
\item To each pair $c \subset d$ of simplices, assign the \emph{restriction function}
\begin{equation*}
\mathcal{A}(c\subset d)(n) =
\begin{cases}
n&\text{if }n\in \mathcal{A}(d)\\
\perp&\text{otherwise}\\
\end{cases}
\end{equation*}
\end{enumerate}
\end{definition}

For instance, if $c \in X$ is a simplex of a link complex, $\mathcal{A}(c)$ specifies which nearby nodes are transmitting and decodable, or $\perp$ if none are.  The restriction functions relate the decodable transmitting nodes at the nodes to which nodes are decodable along an attached wireless link.  Similarly, if $c \in X$ is a simplex of an interference complex, $\mathcal{A}(c)$ also specifies which nearby nodes are transmitting, and effectively locks out any interfering transmissions from other nodes.  

\begin{definition}
  The assignment $\mathcal{A}$ is called the \emph{activation sheaf} and is a sheaf on an abstract simplicial complex.
\end{definition}

  The theory of sheaves explains how to extract consistent information called \emph{sections}, which in the present context consists of nodes whose transmissions do not interfere with one another.

  \begin{definition}
A \emph{section}\index{section!of a sheaf} of $\mathcal{A}$ supported on a subset $Y \subseteq X$ is a function $s:Y \to N$ so that for each $c \subset d$ in $Y$, $s(c) \in \mathcal{A}(c)$ and $\mathcal{A}(c\subset d)\left( s(c)\right) = s(d)$.  We call the subsset $Y$ the \emph{support}\index{support!of a section} of the section.  A section supported on $X$ is called a \emph{global section}.  
\end{definition}

Specifically, global sections are complete lists of nodes that can be transmitting without interference.  

\begin{figure}
\begin{center}
\includegraphics[width=2.5in]{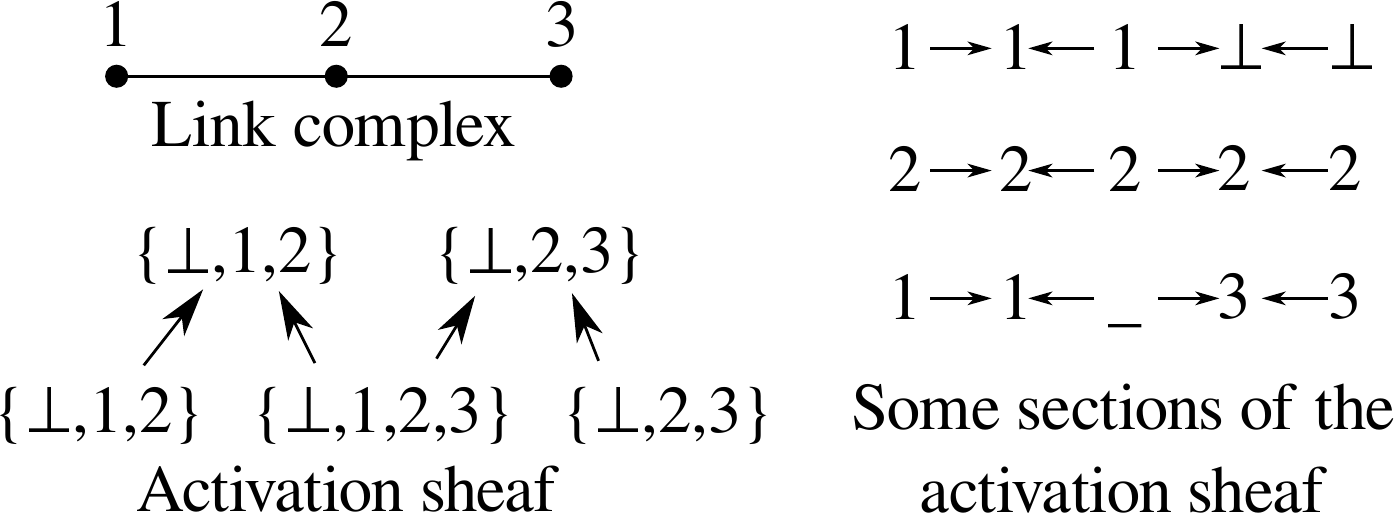}
\caption{A link complex (left top), sheaf $\mathcal{A}$ (left bottom), and three sections (right).  The restrictions are shown with arrows. There is a global section when node 1 transmits (right top), a global section when node 2 transmits (right middle), and a local section with nodes 1 and 3 attempting to transmit, interfering at node 2 (right bottom).  An underscore in the right bottom frame indicates where an element is outside the support of the section.}
\label{fig:linesec}
\end{center}
\end{figure}

Figure \ref{fig:linesec} shows a network with three nodes, labeled 1, 2, and 3.  When node 1 transmits, node 2 receives.  Because node 2 is busy, its link to node 3 must remain inactive (right top).  When node 2 transmits, both nodes 1 and 3 receive (right middle).  The right bottom diagram shows a local section that cannot be extended to the simplex marked with a blank.  This corresponds to the situation where nodes 1 and 3 attempt to transmit but instead cause interference at node 2. 

\begin{definition}
Suppose that $s$ is a global section of $\mathcal{A}$.  The \emph{active region}\index{active region} associated to a node $n\in X$ in $s$ is the set
\begin{equation*}
\text{active}(s,n) = \{a \in X : s(a)=n\},
\end{equation*}
which is the set of all nodes that are currently waiting on $n$ to finish transmitting.
\end{definition}

\begin{lemma}
\label{lem:act}
The active region of a node is a connected, closed subset of $X$ that contains $n$.
\end{lemma}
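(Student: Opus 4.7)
The plan is to establish the three required properties of $\mathrm{active}(s,n)$ in an order that lets each feed the next: first closedness, then that $[n]$ lies in the set (when it is nonempty), then connectedness via a common-coface witness. The central fact to exploit is structural: the condition $s(c)=n \in \mathcal{A}(c)$ forces, by the definition of the stalk, the existence of a simplex $d \supseteq c$ with $n\in d$, and this $d$ will serve as the glue linking arbitrary $c$ to the vertex $[n]$.

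First I would prove closedness. Suppose $c \in \mathrm{active}(s,n)$ and $c' \subseteq c$. Apply the sheaf consistency relation
\begin{equation*}
  \mathcal{A}(c' \subseteq c)\bigl(s(c')\bigr) = s(c) = n.
\end{equation*}
Since the restriction map acts as the identity on elements of $\mathcal{A}(c)$ and sends everything else (including $\perp$) to $\perp$, the only preimage of $n$ is $n$ itself. Hence $s(c')=n$, and $c' \in \mathrm{active}(s,n)$.

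Next I would show the active region contains the vertex $[n]$ whenever it is nonempty. Pick any $c \in \mathrm{active}(s,n)$. Because $n \in \mathcal{A}(c)$, there is a $d \in X$ with $c \subseteq d$ and $n \in d$, i.e.\ both $c \subseteq d$ and $[n] \subseteq d$. Note $n \in \mathcal{A}(d)$ as well (take $d$ itself as the witnessing coface). Sheaf consistency on $c \subseteq d$ now yields
\begin{equation*}
  s(d) = \mathcal{A}(c \subseteq d)(s(c)) = \mathcal{A}(c \subseteq d)(n) = n,
\end{equation*}
so $d \in \mathrm{active}(s,n)$. By the closedness step, every face of $d$ is in $\mathrm{active}(s,n)$; in particular $[n] \in \mathrm{active}(s,n)$.

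Finally, connectedness will follow almost for free from the witness $d$ above. For any $c \in \mathrm{active}(s,n)$, the closure $\{e : e \subseteq d\}$ sits inside $\mathrm{active}(s,n)$ by closedness, contains both $c$ and $[n]$, and is homeomorphic (in its geometric realization) to a single closed simplex, hence connected. Thus every $c \in \mathrm{active}(s,n)$ lies in the same connected component as $[n]$, so $\mathrm{active}(s,n)$ is connected. The main subtlety is really just the first step — tracking what the restriction does to the symbol $\perp$ and confirming $n$ has a unique preimage — after which the coface $d$ supplied by the stalk definition does the rest of the work essentially for free.
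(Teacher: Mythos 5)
Your proof is correct and follows essentially the same route as the paper's: closedness via the fact that $n$ has unique preimage $n$ under the restriction maps, then the coface $d$ guaranteed by $n\in\mathcal{A}(c)$ to pull $[n]$ into the active region and to connect every simplex to $[n]$. You are slightly more explicit than the paper about the nonempty case and about why $n\in\mathcal{A}(d)$, but the argument is the same.
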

\begin{proof}
Consider a simplex $c \in \text{active}(s,n)$.  If $c$ is not a vertex, then there exists a $b \subset c$; we must show that $b\in \text{active}(s,n)$.  Since $s$ is a global section $\mathcal{A}(b\subset c)s(b)=s(c)=n$.  Because $s(c) \not= \perp$, the definition of the restriction function $\mathcal{A}(b\subset c)$ implies that $s(b)=n$.  Thus $b\in \text{active}(s,n)$ so $\text{active}(s,n)$ is closed.

If $c \in \text{active}(s,n)$, then $c$ and $n$ have a coface $d$ in common.  Since $s$ is a global section $s(d)=\mathcal{A}(c \subset d)s(c)=\mathcal{A}(c \subset d)n=n$.  Thus, $n \in \text{active}(s,n)$, because $n$ is a face of $d$ and $\text{active}(s,n)$ is closed.  This also shows that every simplex in $\text{active}(s,n)$ is connected to $n$.
\end{proof}

\begin{lemma}
\label{lem:stactive}
The star over the active region of a node does not intersect the active region of any other node.
\end{lemma}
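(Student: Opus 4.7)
The plan is to unwind the definitions and then invoke the global section condition to force a contradiction. Fix a node $n$ and suppose, toward contradiction, that there exists a simplex $d$ lying both in the star over $\text{active}(s,n)$ and in $\text{active}(s,m)$ for some node $m \ne n$. The goal is to show that the restriction functions of $\mathcal{A}$ leave no room for such a $d$.

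First I would unpack the star hypothesis: by the definition of star given earlier, membership of $d$ in the star over $\text{active}(s,n)$ means there is some simplex $c \in \text{active}(s,n)$ with $c \subseteq d$; in particular, $s(c) = n$. Next I would unpack the active region hypothesis: $d \in \text{active}(s,m)$ means $s(d) = m$. Since $s$ is a global section, the consistency condition forces
\begin{equation*}
\mathcal{A}(c \subseteq d)\bigl(s(c)\bigr) = s(d), \quad \text{i.e.,}\quad \mathcal{A}(c \subseteq d)(n) = m.
\end{equation*}

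Now I would invoke the piecewise definition of the restriction function: $\mathcal{A}(c \subseteq d)(n)$ equals $n$ if $n \in \mathcal{A}(d)$, and equals $\perp$ otherwise. Hence $m \in \{n, \perp\}$, contradicting the assumption that $m$ is a node distinct from $n$. This contradiction establishes the lemma.

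The argument is essentially a one-line check that the restriction maps of the activation sheaf can only preserve or kill a chosen node label, never swap it for another, so the main obstacle is not mathematical depth but careful bookkeeping: making sure the two senses of ``active region'' (as defined only for genuine nodes $n$, never for the sentinel $\perp$) and ``star'' (as a union of stars over the simplices in the active region) are correctly threaded together, and confirming that the global section condition has the form stated in the definition above.
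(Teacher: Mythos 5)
Your proof is correct and takes essentially the same route as the paper's: both arguments reduce to the observation that the restriction maps of the activation sheaf can only send a node label to itself or to $\perp$, never to a different node, so a simplex in the star over $\text{active}(s,n)$ cannot carry the label of another node. The only cosmetic difference is that you argue by contradiction while the paper directly shows $s(c)=\perp$ for simplices in the star but outside $\text{active}(s,n)$; you should also note the degenerate case where the offending simplex lies in $\text{active}(s,n)$ itself (there the contradiction is immediate since $s$ is a function), which the paper dispatches with its ``without loss of generality.''
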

\begin{proof}
Let $c\in \text{star } \text{active}(s,n)$.  Without loss of generality, assume that $c \notin \text{active}(s,n)$.  Therefore, there is a $b \in \text{active}(s,n)$ with $b\subset c$.  By the definition of the restriction function $\mathcal{A}(b\subset c)$, the assumption that $c\notin \text{active}(s,n)$, and the fact that $s$ is a global section, $s(c)$ must be $\perp$.
\end{proof}

\begin{corollary}
  If $s$ is a global section of an activation sheaf $\mathcal{A}$, then the set of simplices $c$ where $s(c) \not= \perp$ consists of a disjoint union of active regions of nodes. 
\end{corollary}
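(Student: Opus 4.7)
The plan is to prove this corollary almost directly from the definition of the active region, with the lemmas immediately preceding it serving to explain why the resulting decomposition is genuinely topological (and not just a bookkeeping tautology). Write $S := \{c \in X : s(c) \neq \perp\}$ for the support-like subset of interest; I want to show that $S = \bigsqcup_{n \in N} \mathrm{active}(s,n)$.

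First I would establish the containment $S = \bigcup_{n \in N} \mathrm{active}(s,n)$. If $c \in S$, then by definition $s(c) \in \mathcal{A}(c)$ and $s(c) \neq \perp$, so $s(c) = n$ for some vertex $n \in N$; hence $c \in \mathrm{active}(s,n)$. Conversely, every element of any $\mathrm{active}(s,n)$ satisfies $s(c) = n \neq \perp$ and so lies in $S$.

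Next I would show the union is disjoint. If $c \in \mathrm{active}(s,n) \cap \mathrm{active}(s,n')$, then $s(c) = n$ and $s(c) = n'$, so $n = n'$. This is essentially immediate from the fact that $s$ is a function. At this point the corollary is already proved at the set-theoretic level, and the main work is done.

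Finally, to emphasize that the decomposition is topological rather than merely set-theoretic, I would invoke the preceding lemmas: Lemma \ref{lem:act} shows that each $\mathrm{active}(s,n)$ is a closed, connected subcomplex containing $n$, and Lemma \ref{lem:stactive} shows that the star of any one active region meets no other active region, so distinct active regions are separated by a neighborhood in $X$. There is no real obstacle here; the only thing to watch is to be careful that the disjointness argument uses only the functional nature of $s$ rather than anything about restriction maps, and then to cite Lemmas \ref{lem:act} and \ref{lem:stactive} to upgrade set-theoretic disjointness to the expected topological picture of pairwise non-adjacent components.
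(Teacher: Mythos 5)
Your proof is correct and matches the paper's (implicit) reasoning: the paper states this corollary without proof precisely because, as you observe, the decomposition into fibers $s^{-1}(n)$ is definitionally the decomposition into active regions, with disjointness following from $s$ being a function and the topological content supplied by Lemmas \ref{lem:act} and \ref{lem:stactive}. Nothing is missing.
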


\begin{lemma}
  \label{lem:activeinvariant}
The active region of a node is independent of the global section.  More precisely, if $r$ and $s$ are global sections of $\mathcal{A}$ and the active regions associated to $n \in X$ are nonempty in both sections, then $\text{active}(s,n)=\text{active}(r,n)$.
\end{lemma}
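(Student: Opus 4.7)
The plan is to argue that each of the two active regions in the statement coincides with a set $S_n$ that depends only on the sheaf and the vertex $n$, not on the choice of global section. Concretely, I will take $S_n := \{a \in X : n \in \mathcal{A}(a)\}$, which by the definition of $\mathcal{A}$ equals $\{a \in X : \{n\}\cup a \in X\}$, i.e., the closed star of $n$.

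First, because both $\text{active}(s,n)$ and $\text{active}(r,n)$ are nonempty, Lemma \ref{lem:act} yields $n \in \text{active}(s,n) \cap \text{active}(r,n)$, so in particular $s([n]) = r([n]) = n$. It therefore suffices to show, for any global section $t$ of $\mathcal{A}$ satisfying $t([n]) = n$, that $\text{active}(t,n) = S_n$. The inclusion $\text{active}(t,n) \subseteq S_n$ is immediate: if $t(a) = n$, then the requirement $t(a) \in \mathcal{A}(a)$ forces $n \in \mathcal{A}(a)$.

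For the reverse inclusion, let $a \in S_n$, so $\{n\}\cup a \in X$; set $d := \{n\}\cup a$ (so $d = a$ when $n \in a$). I propagate $t$ up the inclusion $[n] \subset d$: since $n \in d$ we have $n \in \mathcal{A}(d)$, hence $\mathcal{A}([n]\subset d)(n) = n$, and the section condition gives $t(d) = n$. Then I propagate down the inclusion $a \subset d$: the section condition gives $\mathcal{A}(a \subset d)(t(a)) = t(d) = n$, and by construction $\mathcal{A}(a\subset d)$ returns either its argument (when that argument lies in $\mathcal{A}(d)$) or $\perp$. Since the output is $n \neq \perp$, the argument must itself be $n$; that is, $t(a) = n$, so $a \in \text{active}(t,n)$. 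Applying the conclusion once with $t = s$ and once with $t = r$ gives $\text{active}(s,n) = S_n = \text{active}(r,n)$.

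The only step that demands a bit of care is exploiting the rigidity of the restriction maps: $\mathcal{A}(c \subset d)$ is the identity on elements of $\mathcal{A}(d)$ and sends everything else to $\perp$, so a section value of $n$ can only ``survive'' at a coface where $n$ is itself available. Recognizing this turns what at first reads like a section-dependent statement into a purely combinatorial one about the closed star of $n$, after which the two short propagations (up from $[n]$ to $d$, then down from $d$ to $a$) close the argument.
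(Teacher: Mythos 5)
Your proof is correct and follows essentially the same route as the paper's: both use Lemma \ref{lem:act} to get $s([n])=r([n])=n$, propagate up to a common coface $d$ of $[n]$ and the given simplex, and then pull the value back down (you via the rigidity of the restriction maps, the paper via the closedness already established in Lemma \ref{lem:act}). Your packaging is marginally stronger in that it makes explicit the section-independent description $\text{active}(t,n)=S_n$, the closed star of $n$, but the underlying argument is the same.
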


Notice that if either of $r$ or $s$ has an empty active region, then Lemma \ref{lem:activeinvariant} makes no assertions.

\begin{proof}
Without loss of generality, we need only show that $\text{active}(s,n) \subseteq \text{active}(r,n)$.  If $c \in\text{active}(s,n)$, there must be a simplex $d\in X$ that has both $n$ and $c$ as faces.  Now $s(n)=r(n)=n$ by Lemma \ref{lem:act}, which means that $r(d)=\mathcal{A}(n \subset d)r(n)=n$.  Therefore, since $\text{active}(r,n)$ is closed, this implies that $c \in \text{active}(r,n)$.  
\end{proof}

\begin{figure}
\begin{center}
\includegraphics[width=1.0\textwidth]{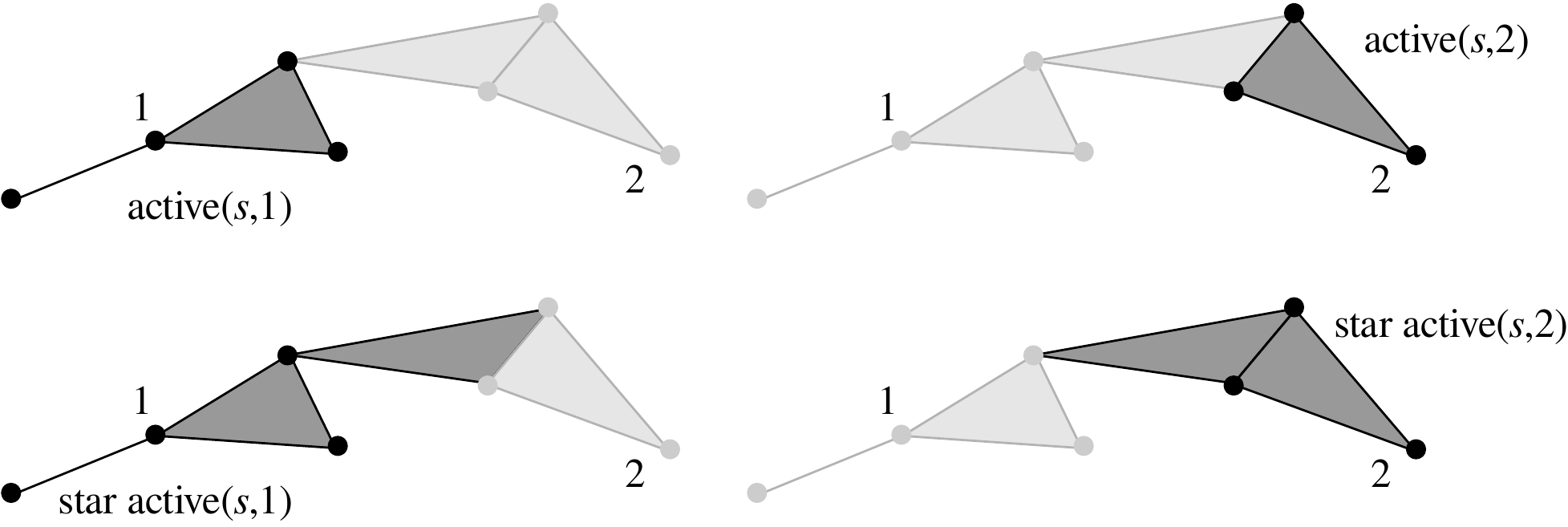}
\caption{The active regions of two transmitters within a link complex (top row) and the stars over their active regions (bottom row).}
\label{fig:active_regions}
\end{center}
\end{figure}

Figure \ref{fig:active_regions} shows an example of a link complex in which two transmitters, labeled $1$ and $2$, are indicated.  Their active regions are shown in the top row of Figure \ref{fig:active_regions}.  Because of Lemma \ref{lem:act}, each of these active regions is a closed set.  The stars over their active regions are shown in the bottom row of Figure \ref{fig:active_regions}.  Notice that because of Lemma \ref{lem:stactive}, the star over the active region of transmitter $1$ does not intersect the active region of transmitter $2$ or vice versa.  Additionally, according to Lemma \ref{lem:activeinvariant}, it is unnecessary to specify the global section of the activation sheaf used to construct these regions.

\begin{corollary}
  \label{cor:activationsections}
  The space of global sections of an activation sheaf consists of all sets of nodes that can be transmitting simultaneously without interference.
\end{corollary}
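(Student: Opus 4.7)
The plan is to prove the corollary by establishing a bijection between global sections $s$ of $\mathcal{A}$ and collections $T \subseteq N$ of nodes with the property that no two distinct elements of $T$ share a coface in $X$ (which is the combinatorial embodiment of ``transmitting simultaneously without interference,'' since a shared coface is precisely the local witness for potential interference in the link or interference complex).

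First, I would extract the forward direction from the structural lemmas already in hand. Given a global section $s$, let $T(s) := \{n \in N : \text{active}(s,n) \neq \emptyset\}$. The previous corollary says the support of $s$ is the disjoint union of these active regions, and Lemma \ref{lem:stactive} says the star of one active region is disjoint from the others. If two nodes $n \neq n'$ in $T(s)$ shared a coface $d$, then $d$ would lie in both $\text{star}(\text{active}(s,n))$ and $\text{active}(s,n')$ (since $n'$ belongs to its own active region by Lemma \ref{lem:act}), contradicting Lemma \ref{lem:stactive}. Hence $T(s)$ has no two elements sharing a coface.

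For the reverse direction, given such a non-interfering set $T$, I would define a candidate global section by
\begin{equation*}
s_T(c) := \begin{cases} n & \text{if there exists } n \in T \text{ with a coface } d \supseteq c \cup \{n\} \text{ in } X, \\ \perp & \text{otherwise.} \end{cases}
\end{equation*}
Well-definedness of the first clause is exactly the non-interference hypothesis: if two distinct $n, n' \in T$ both had a coface in common with $c$, then taking any simplex $d \supseteq c \cup \{n\}$ and $d' \supseteq c \cup \{n'\}$ would force $n, n'$ themselves to have a common coface (via $c$), which is forbidden. The membership $s_T(c) \in \mathcal{A}(c)$ is immediate from the definition of the stalk. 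For the restriction condition on $c \subset d$, there are two cases: if $s_T(c) = n$, then the coface witnessing this, together with $d$, shows $n \in \mathcal{A}(d)$, so $\mathcal{A}(c \subset d)(n) = n = s_T(d)$; if $s_T(c) = \perp$ then $\mathcal{A}(c \subset d)(\perp) = \perp$ trivially, and we must check $s_T(d) = \perp$, which holds because any $n \in T$ sharing a coface with $d$ would also share that coface with $c \subset d$.

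Finally, I would check that $T \mapsto s_T$ and $s \mapsto T(s)$ are mutually inverse: starting from a section $s$, the construction recovers $s$ on each active region by Lemma \ref{lem:activeinvariant} (the active regions are determined by the transmitting set), and starting from $T$ one verifies $T(s_T) = T$ directly since each $n \in T$ lies in its own stalk. The main obstacle I anticipate is the well-definedness argument in the reverse direction: the passage from ``no two nodes of $T$ share a coface'' to ``for each $c$, at most one $n \in T$ has a coface in common with $c$'' requires a careful unpacking of what ``coface in common with $c$'' means when $c$ itself contains elements of $T$, and the cleanest way is probably to handle the case $n \in c$ separately and then use the coface relation transitively.
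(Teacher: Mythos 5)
There is a genuine gap, and it is exactly at the point you flagged as your ``main obstacle'': your combinatorial encoding of ``transmitting simultaneously without interference'' as ``no two distinct elements of $T$ share a coface'' is the wrong condition, and the well-definedness argument for $s_T$ fails. The claim that two distinct $n,n'\in T$ both having a coface in common with $c$ would force $n$ and $n'$ to have a common coface is false: from $d\supseteq c\cup\{n\}$ and $d'\supseteq c\cup\{n'\}$ you only get that $d$ and $d'$ share the \emph{face} $c$, not that $n$ and $n'$ share a coface. The paper's own Figure \ref{fig:linesec} is a counterexample: in the path complex on nodes $1,2,3$ with simplices $[1,2]$ and $[2,3]$, nodes $1$ and $3$ share no coface, so $T=\{1,3\}$ satisfies your hypothesis, yet $s_T([2])$ is ill-defined (both $1$ and $3$ have a coface containing $[2]$) --- and indeed the paper states that $1$ and $3$ transmitting simultaneously interfere at node $2$ and admit only a local, not a global, section. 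Interference in this model is witnessed by a common \emph{listener}, not only by a common coface of the two transmitters. The correct characterization is that the would-be active regions $\text{cl}\,(\text{star}\,[n])=\{c: c\text{ and }n\text{ have a common coface}\}$ for $n\in T$ are pairwise disjoint; with that condition your construction of $s_T$ goes through verbatim, and the forward direction becomes the observation that a global section forces $s(c)=n$ on all of $\text{cl}\,(\text{star}\,[n])$ whenever $s([n])=n$, so distinct transmitters must have disjoint such sets.

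For comparison, the paper offers no separate proof: it treats the corollary as an immediate consequence of Lemmas \ref{lem:act}--\ref{lem:activeinvariant} and the preceding corollary, which together say that the support of a global section is a disjoint union of active regions, each active region is determined independently of the section, and the star of one active region misses every other. That machinery already encodes the correct disjointness condition, so the intended argument is essentially your bijection but with ``pairwise disjoint closed stars'' in place of ``no shared coface.'' Your forward direction proves only a necessary (not sufficient) condition, so as written the two maps are not mutually inverse.
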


\subsection{An algebraic interlude: relative homology}

Homology is a global topological invariant, which is to say that it applies to the entirety of a topological space.  Since we wish to identify portions of the network that are more critical, it is useful to construct a local version of homology.  This can be achieved by an algebraic construction that temporarily removes a portion of the space from consideration, called relative homology.

Suppose that $Y \subseteq X$ is a subcomplex of an abstract simplicial complex.

\begin{definition}
  \label{def:chain_space}
The \emph{relative $k$-chain space}\index{chain complex!relative} $C_k(X,Y)$ is the vector space whose basis consists of the $k$-dimensional simplices of $X$ that are not in $Y$.  We can define the \emph{relative boundary map} $\partial_k : C_k(X,Y) \to C_{k-1}(X,Y)$ using
\begin{equation}
  \partial_k ([v_0,\dotsc,v_k]) = \sum_{j=0}^k (-1)^j
  \begin{cases}
    \nabla_j [v_0,\dotsc,v_k] &\text{if }\nabla_j  [v_0,\dotsc,v_k] \notin Y,\\
    0 & \text{otherwise.}\\
\end{cases}
\end{equation}
\end{definition}

This is really a more elaborate form of the simplicial chain complex defined in \eqref{eq:simplicial}, and the same proof as before establishes that $(C_\bullet(X,Y),\partial_\bullet)$ is a chain complex.  Naturally enough, there is a notion of \emph{relative simplicial homology}\index{simplicial homology!relative}.

\begin{definition}
  \label{def:relative_simplicial_homology}
  For a subcomplex $Y \subseteq X$ of an abstract simplicial complex $X$,
  \begin{equation*}
    H_k(X,Y):=H_k(C_\bullet(X,Y),\partial_\bullet)
  \end{equation*}
  is called the \emph{relative homology of the pair $(X,Y)$}.
\end{definition}

As before, there is a notion of simplicial maps inducing maps on the relative homology.  However, not every simplicial map works: it needs to respect the subcomplexes!

\begin{proposition}\cite[Props. 2.9, 2.19]{Hatcher_2002}
  \label{prop:relative_functor}
Every simplicial map $f: X \to Z$ from one abstract simplicial complex to another which restricts to a simplicial map $Y \to W$ induces a linear map $H_k(X,Y)\to H_k(Z,W)$ for each $k$.  We call $(X,Y)$ and $(Z,W)$ \emph{simplicial pairs} and $f$ a \emph{pair map} $(X,Y) \to (Z,W)$.
\end{proposition}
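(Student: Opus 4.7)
The plan is to lift the already-established absolute case to the relative setting, exploiting the fact that $C_k(X,Y)$ as in Definition~\ref{def:chain_space} is canonically isomorphic to the quotient $C_k(X)/C_k(Y)$, where $C_k(Y)$ denotes the span of the $k$-simplices that lie in $Y$. Under this identification, the relative boundary in Definition~\ref{def:chain_space} is precisely the absolute boundary of $X$ followed by projection onto this quotient (that is, by discarding any face that lies in $Y$).

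I would first invoke the earlier proposition that every simplicial map induces a chain map $f_\bullet : C_\bullet(X) \to C_\bullet(Z)$ on the absolute chain complexes. Since by hypothesis $f$ restricts to a simplicial map $Y \to W$, a basis element $[v_0,\dots,v_k]$ of $C_k(Y)$ is carried to $[f(v_0),\dots,f(v_k)]$ (or to zero, if two images coincide), which lies in $C_k(W)$; hence $f_\bullet(C_\bullet(Y)) \subseteq C_\bullet(W)$. This containment lets $f_\bullet$ descend to a well-defined linear map $\bar f_\bullet : C_\bullet(X,Y) \to C_\bullet(Z,W)$, described on a basis simplex $[v_0,\dots,v_k] \notin Y$ by $[f(v_0),\dots,f(v_k)]$ when the latter is a simplex of $Z$ not in $W$, and by $0$ otherwise. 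I would then check that $\bar f_\bullet$ commutes with the relative boundaries: the absolute identity $f_\bullet \circ \partial = \partial \circ f_\bullet$ passes to the quotient because the codomain projection annihilates exactly the image of $C_{\bullet-1}(W)$. Finally, the earlier proposition that every chain map induces linear maps on homology delivers the required $H_k(X,Y) \to H_k(Z,W)$ for each $k$.

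The main obstacle, though it is largely bookkeeping, is confirming that zeroing out faces lying in $W$ in the codomain is compatible with zeroing out faces lying in $Y$ in the domain. The pair-map hypothesis supplies exactly the needed compatibility: any face of $[v_0,\dots,v_k] \notin Y$ which happens to lie in $Y$ is sent by $f$ into $W$, so the discarded summands on the two sides of $\bar f_\bullet \circ \partial_k = \partial_k \circ \bar f_\bullet$ match under $f$, and the surviving summands match by the absolute chain-map identity. No additional case analysis or sign-tracking beyond what was already used in the absolute case is required.
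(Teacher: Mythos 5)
Your argument is correct and is essentially the standard one: the paper itself gives no proof beyond citing Hatcher, and Hatcher's argument is exactly your identification of $C_k(X,Y)$ with the quotient $C_k(X)/C_k(Y)$, the observation that $f_\bullet(C_\bullet(Y))\subseteq C_\bullet(W)$ forces the absolute chain map to descend to the quotients, and an appeal to the fact that chain maps induce maps on homology. Your closing remark about the compatibility of the two ``zeroing out'' conventions is the right point to flag, and the quotient formulation disposes of it automatically, since a face of $\sigma\notin Y$ whose image lands in $W$ is simply zero in $C_\bullet(Z)/C_\bullet(W)$ on both sides of the commuting square.
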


\subsection{Using activation patterns}

The structure of the global sections of an activation sheaf leads to a model in which an active node silences all other nodes in its vicinity.  In this section, we develop the concept of the local homology dimension, and show how it can identify topological ``pinch points'' within the network.

\begin{definition}
  Because of the Lemmas, we call the star over an active region associated to a node $n$ the \emph{region of influence}\index{region of influence}.  The region of influence of a facet is the star over the closure of that facet.  The region of influence for a collection of facets $F$ can be written as a union
\begin{equation*}
\text{roi } F = \bigcup_{f \in F} \text{star } \text{cl } f.
\end{equation*}
\end{definition}

One can therefore interpret the bottom row of Figure \ref{fig:active_regions} as showing the regions of influence of transmitters $1$ and $2$.

In our previous work \cite{RobinsonGlobalSIP2014}, the region of influence was used without detailed justification; the following Corollary provides this needed justification.

\begin{corollary}
\label{cor:unaffected}
The complement of the region of influence of a facet is a closed subcomplex.
\end{corollary}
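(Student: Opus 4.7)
The plan is to reduce the statement to the general fact that, in an abstract simplicial complex, the complement of an open set is closed, and then observe that the region of influence of a facet is open essentially by construction.

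First, I would unwind the definition of the region of influence. By the definition given, $\text{roi } f = \text{star } \text{cl } f$, and since the star of a set of simplices is a union of stars of individual simplices, we have
\begin{equation*}
\text{roi } f = \bigcup_{a \in \text{cl } f} \text{star}(a).
\end{equation*}
The excerpt defines an open set in an abstract simplicial complex as precisely a union of stars, so $\text{roi } f$ is open. In particular, this is also true for a union of regions of influence, which is what the corollary ultimately needs.

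Next I would establish the general topological fact that the complement of an open subset of an abstract simplicial complex is a closed subcomplex. Let $U \subseteq X$ be open and let $\sigma \in X \setminus U$. I must show that every face $\tau \subseteq \sigma$ also lies in $X \setminus U$. Suppose for contradiction $\tau \in U$; then $U = \bigcup_b \text{star}(b)$ for some simplices $b$, and $\tau$ lies in some $\text{star}(b)$, meaning $b \subseteq \tau$. But then $b \subseteq \sigma$ as well, so $\sigma \in \text{star}(b) \subseteq U$, a contradiction. Thus $X \setminus U$ is closed under taking faces, which is exactly the definition of a closed subcomplex.

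Combining these two observations yields the corollary immediately: $\text{roi } f$ is open, so $X \setminus \text{roi } f$ is closed. I do not expect any real obstacle here; the statement is essentially a bookkeeping consequence of the open/closed duality in the Alexandrov-type topology on an abstract simplicial complex. The only mild subtlety to keep straight is the distinction between a single simplex $a$ and the subset $\{a\}$ when invoking the star construction, but this is notational rather than substantive.
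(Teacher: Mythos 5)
Your proof is correct, and it follows exactly the route the paper intends (the corollary is stated without an explicit proof, immediately after the definition of the region of influence as a star): the region of influence $\operatorname{star}\operatorname{cl} f$ is a union of stars and hence open, and the complement of any open set is closed under taking faces, i.e., is a closed subcomplex. Your verification that complements of unions of stars are face-closed is the only substantive step, and it is carried out correctly.
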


Given this justification, \cite{RobinsonGlobalSIP2014} shows that critical nodes or links are those simplices $c$ for whom the \emph{local homology dimension}\index{homology!local} (see also \cite{Joslyn_2016})
\begin{equation*}
  LH_k(c) = \text{dim } H_k(X,X\,\backslash\,\text{roi } c)
\end{equation*}
is larger than the average.

This implies the following experimental hypothesis: \emph{If a node is critical, it will have a large local homology dimension}.  Since the {\tt ns2} network simulator provides complete transcripts of all packets, we can define a critical node to be one that \emph{forwards} a large number of packets compared to other nodes in the network \cite{Arulselvan_2009}.

\begin{figure}
\begin{center}
\includegraphics[width=4in]{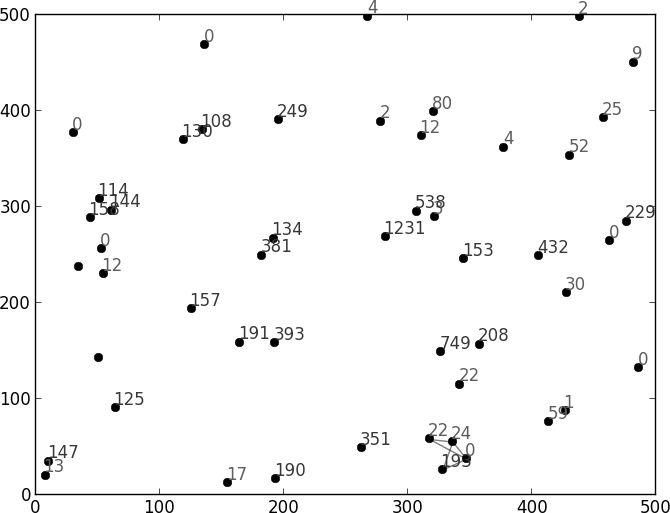} 
\caption{Locations of nodes and forwarded packet counts (axes in meters)}
\label{fig:network}
\end{center}
\end{figure}

We constructed a small simulation with 50 nodes as shown in Figure \ref{fig:network}.  Packets were randomly assigned source and destination nodes within the network, and all packet histories were recorded for analysis.

\begin{figure}
\begin{center}
\includegraphics[width=3in]{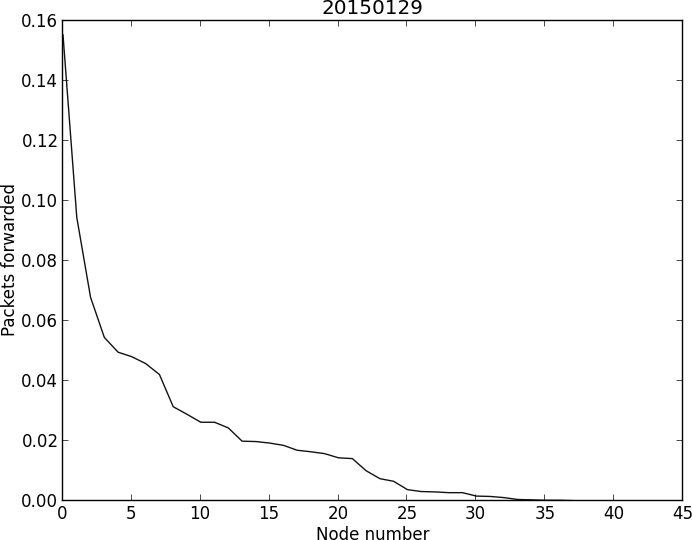} 
\caption{Probability that a given packet will be forwarded by a specific node}
\label{fig:fwdhist}
\end{center}
\end{figure}

Figure \ref{fig:fwdhist} shows the probability that a node will forward a random packet.  (The node numbers have been sorted from greatest to least probability.) The figure shows that most nodes forward only a small number of packets, while a few nodes carry considerably more traffic.

\begin{figure}
\begin{center}
\includegraphics[width=4in]{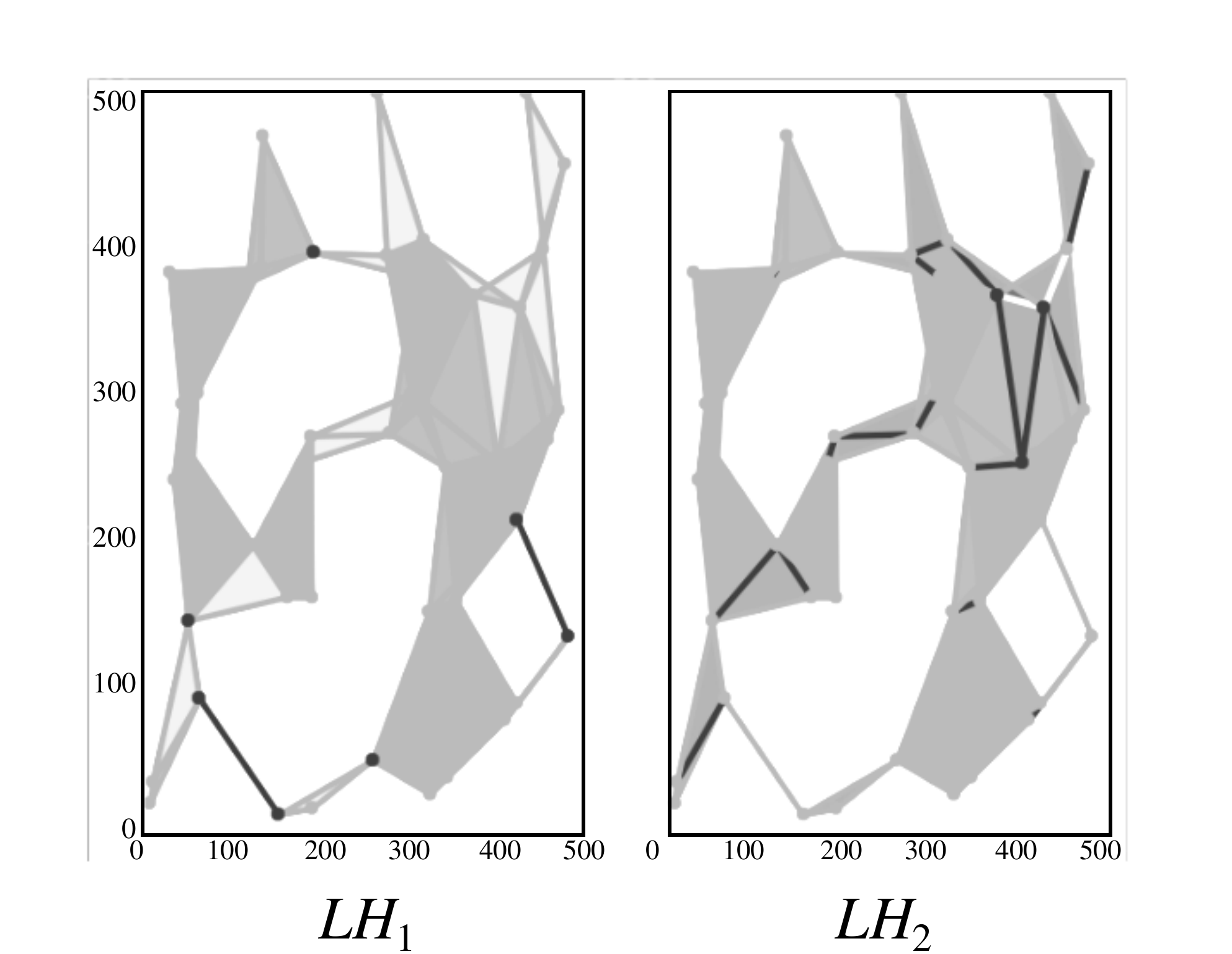} 
\caption{Dimension of local homology $LH_1$ (left) and $LH_2$ (right).  Axes in meters; Gray = 0, Black = 1, White = 2.}
\label{fig:networklh}
\end{center}
\end{figure}

Figure \ref{fig:networklh} shows the dimension of local homology over all nodes and links in the network.  In this particular network, the local homology dimension is only 0, 1, or 2.  It is clear that nodes with high $LH_1$ occupy certain ``pinch points'' in the network.

\begin{figure}
\begin{center}
\includegraphics[width=3.5in]{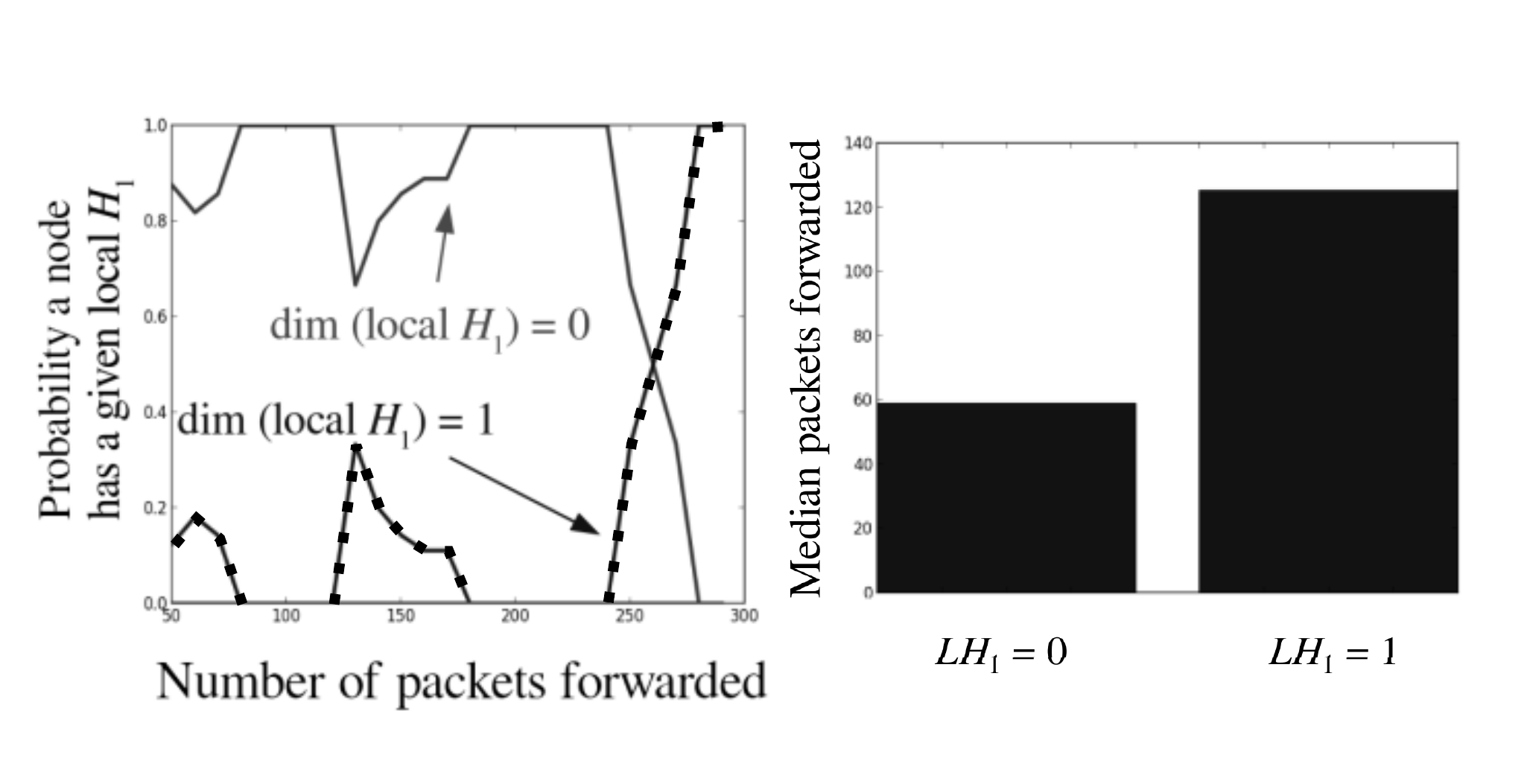} 
\caption{Probability a node has a certain local homology dimension given the number of packets it forwards}
\label{fig:fwdlh}
\end{center}
\end{figure}

Figure \ref{fig:fwdlh} shows the probability that a node forwarding a certain number of packets will have the given value of $LH_1$.  (We did not find a strong correspondence between forwarded packets and $LH_2$.)  It is immediately clear that all nodes forwarding a large number of packets are assigned a high local homology, but the converse is not necessarily true.  Local homology dimension is an indication that a node may be critical, but does not guarantee criticality.

\subsection{Cohomological analysis}

Although the space of global sections for an activation sheaf is a useful invariant, its sheaf cohomology\footnote{For background on, and other practical applications of, sheaf cohomology, see \cite{Robinson_2014, GhristEAT}.} is rather uninteresting.  We need to enrich their structure somewhat to see this, though.

\begin{definition}
  If $\mathcal{A}$ is an activation sheaf on an abstract simplicial complex $X$, the \emph{vector activation sheaf}\index{activation sheaf!vector} $\widehat{\mathcal{A}}$ is given by specifying its stalks and restrictions:
\begin{enumerate}
\item To each simplex $c\in X$, let $\widehat{\mathcal{A}}(c)$ be the vector space whose basis is $\mathcal{A}\backslash\{\perp\}$ (so the dimension of this vector space is the cardinality of $\mathcal{A}$ without counting $\perp$)
\item The restriction map $\widehat{\mathcal{A}}(c\subset d)(n)$ is the basis projection, which is well-defined since $\mathcal{A}(d) \subseteq \mathcal{A}(c)$.
\end{enumerate}
\end{definition}

\begin{proposition}
  The dimension of the cohomology spaces of a vector activation sheaf $\widehat{\mathcal{A}}$ on a link complex $X$ are
  \begin{equation*}
    \text{dim }H^k(\widehat{\mathcal{A}}) = \begin{cases}
      \text{the total number of nodes}&\text{if }k = 0\\
      0&\text{otherwise}\\
      \end{cases}
  \end{equation*}
\end{proposition}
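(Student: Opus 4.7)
The plan is to decompose $\widehat{\mathcal{A}}$ as a direct sum of subsheaves indexed by the nodes, and then compute the cohomology of each summand individually.

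For each $n \in N$, let $\widehat{\mathcal{A}}_n$ be the subsheaf whose stalk at $c$ is the one-dimensional subspace $\mathbb{F}\cdot n \subseteq \widehat{\mathcal{A}}(c)$ when $n \in \mathcal{A}(c)$, and $0$ otherwise. Because every restriction of $\widehat{\mathcal{A}}$ is a basis projection, the basis vector labelled $n$ is sent either to itself (if $n$ still lies in the target stalk) or to $0$, so $\widehat{\mathcal{A}}_n$ really is a subsheaf and
\[
\widehat{\mathcal{A}} \;=\; \bigoplus_{n \in N} \widehat{\mathcal{A}}_n.
\]
Since the cellular cochain complex and its cohomology commute with direct sums, one has $H^k(\widehat{\mathcal{A}}) = \bigoplus_{n\in N} H^k(\widehat{\mathcal{A}}_n)$, and it suffices to show that $\dim H^0(\widehat{\mathcal{A}}_n) = 1$ and $H^k(\widehat{\mathcal{A}}_n) = 0$ for each $k>0$ and each $n$.

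The support of $\widehat{\mathcal{A}}_n$ is precisely the closed star of $n$, since by definition $n \in \mathcal{A}(c)$ says that $c$ is a face of some simplex containing $n$. This support is a closed subcomplex, and on it $\widehat{\mathcal{A}}_n$ is the constant $\mathbb{F}$-sheaf with identity restrictions, so its cellular cochain complex coincides with the ordinary simplicial cochain complex of the closed star with coefficients in $\mathbb{F}$. For every simplex $c$ in the closed star, there is a simplex $d \supseteq c$ with $n \in d$, so $c \cup \{n\} \subseteq d$ is itself a simplex; consequently the closed star is a cone with apex $n$ and is contractible. Thus $H^0(\widehat{\mathcal{A}}_n) = \mathbb{F}$ and $H^k(\widehat{\mathcal{A}}_n) = 0$ for $k > 0$, and summing over $n \in N$ gives the claimed dimensions.

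The step that most needs care is the per-node decomposition: one has to check that the containment $\mathcal{A}(d) \subseteq \mathcal{A}(c)$ whenever $c \subset d$, together with the fact that restrictions are literal basis projections rather than arbitrary linear maps, really does factor $\widehat{\mathcal{A}}$ into subsheaves compatibly with \emph{all} restrictions simultaneously. Once that compatibility is in place, the remainder of the argument reduces to the topological observation that each summand is the constant sheaf on a cone. I would note that nothing in this argument uses the clique structure of the link complex in an essential way, so the same computation should go through for the vector activation sheaf on any abstract simplicial complex.
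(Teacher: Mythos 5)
Your proof is correct, but it takes a genuinely different route from the one in the paper. The paper's argument embeds $\widehat{\mathcal{A}}$ into a direct sum of constant sheaves $\mathcal{F}_i$ supported on the closures of the facets (one copy per vertex of each facet), and then chases the long exact sequence coming from $0 \to \widehat{\mathcal{A}} \to \bigoplus_i \mathcal{F}_i \to \mathcal{S} \to 0$, invoking Corollary \ref{cor:unaffected} to argue that the quotient $\mathcal{S}$ is again a sum of constant sheaves on closed subcomplexes, and a separate surjectivity argument on $\bigoplus_i H^0(\mathcal{F}_i) \to H^0(\mathcal{S})$ to kill $H^1$. You instead split $\widehat{\mathcal{A}}$ itself into rank-one summands $\widehat{\mathcal{A}}_n$ indexed by nodes; this is legitimate precisely because the restriction maps are basis projections and hence diagonal with respect to the node basis, and each summand is the constant sheaf on the closed star of $n$, which is a cone with apex $n$ and therefore has the cohomology of a point. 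Your route is shorter and avoids the two delicate steps in the paper's proof (the structure of $\mathcal{S}$ and the final surjectivity argument), and it makes transparent your closing observation that the clique structure of the link complex is never used, so the computation holds verbatim for the vector activation sheaf on any abstract simplicial complex, in particular on an interference complex. The only points worth making explicit in a polished write-up are that the cochain complex of a sheaf supported on a \emph{closed} subcomplex coincides with that of its restriction to the subcomplex (this uses closedness, since every face of a simplex in the support then also lies in the support), and that for the constant sheaf this cochain complex is the ordinary simplicial cochain complex, so contractibility of the cone finishes the argument.
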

\begin{proof}
  Every global section of $\mathcal{A}$ corresponds to a global section of $\widehat{\mathcal{A}}$, but formal linear combinations of global sections of $\mathcal{A}$ are also global sections of $\widehat{\mathcal{A}}$.  Therefore, a global section of $\widehat{\mathcal{A}}$ merely consists of a list of those nodes that are transmitting, without regard for whether they interfere.

  The fact that the other cohomology spaces are trivial is considerably more subtle.  Consider the decomposition
  \begin{equation*}
    X = \bigcup_i F_i
  \end{equation*}
  of the link complex into the set of its facets.  Suppose that $F_i$ is a facet of dimension $k$, and define $\mathcal{F}_i$ to be the direct sum of $k+1$ copies of the constant sheaf supported on $F_i$.  (Each copy corresponds one of the vertices of $F_i$.)  Then there is an exact sequence of sheaves
  \begin{equation*}
    \xymatrix{
      0 \to \widehat{\mathcal{A}} \ar[r]^\Delta & \bigoplus_i \mathcal{F}_i \ar[r]^m & \mathcal{S} \to 0\\
      }
  \end{equation*}
  where $\Delta$ is a map that takes a basis vector corresponding to a given node to the linear combination of all corresponding basis vectors in each copy of the constant sheaves, and $m$ is therefore a kind of difference map.  This exact sequence leads to a long exact sequence
  \begin{equation*}
\dotsb H^{k-1}(\mathcal{S}) \to H^k(\widehat{\mathcal{A}}) \to \bigoplus_i H^k(\mathcal{F}_i) \to H^k(\mathcal{S}) \dotsb
  \end{equation*}
Since each $\mathcal{F}_i$ is a direct sum of constant sheaves supported on a closed subcomplex, it only has nontrivial cohomology in degree 0.

Observe that $\mathcal{S}$ is a sheaf supported on sets of simplices lying in the intersections of facets.  By Corollary \ref{cor:unaffected}, $\mathcal{S}$ must be a direct sum of copies of constant sheaves supported on closed subcomplexes, like each $\mathcal{F}_i$.  Thus $\mathcal{S}$ only has nontrivial cohomology in degree 0, which means that for $k>1$, $H^k(\widehat{\mathcal{A}}) = 0$.

It therefore remains to address the $k=1$ case, which comes about from the exact sequence
\begin{equation*}
  \bigoplus_i H^0(\mathcal{F}_i) \to H^0(\mathcal{S}) \to H^1(\widehat{\mathcal{S}}) \to 0.
\end{equation*}
The leftmost map is surjective, since every global section of $\mathcal{S}$ is given by specifying a single transmitting node.  By picking exactly one facet containing that node, a global section of the corresponding $\mathcal{F}_i$ may be selected in the preimage.  Thus the map $H^0(\mathcal{S}) \to H^1(\widehat{\mathcal{S}})$ must be the zero map and yet also surjective.  This completes the proof.
\end{proof}

\section{Conclusion}\label{Conclusion}

We have only scratched the surface of topological techniques that can be fruitfully applied to problems in the cyber domain. Discrete Morse theory \cite{Scoville_2019}, the algebraic topology of finite topological spaces \cite{barmak2011algebraic}, and connections between simplicial complexes and partially ordered sets \cite{wachs2007poset} provide just a few opportunities for applications that we have not discussed at all here. For example, a notion of a weighted Dowker complex and an associated partial order can be used for \emph{topological differential testing}\index{topological differential testing} to discover files that similar programs handle inconsistently \cite{ambrose2020topological}. 

More generally, both discrete and continuous topological methods can provide unique capabilities for problems in the cyber domain. The analysis of concurrent protocols and programs highlights this: while simplicial complexes have been used to solve problems in concurrency \cite{herlihy2014distributed}, the entire (recently developed) theory of directed topology traces its origin to static analysis of concurrent programs \cite{fajstrup2016directed}.

In short, while there are many cyber-oriented problems that present a large attack surface for mainstream topological data analysis, the space of applicable techniques is much larger. Cyber problems are likely to continue to motivate future developments in topology, both theoretical and applied.

\subsection*{Acknowledgement}
The authors thank Samir Chowdhury, Fabrizio Romano Genovese, Jelle Herold, and Matvey Yutin for helpful discussions; Greg Sadosuk for producing Figure \ref{fig:intel_cfg} and its attendant code, and Richard Latimer for providing code and data relating to sorting networks.

This research was partially supported with funding from the Defense Advanced Research Projects Agency (DARPA) via Federal contracts HR001115C0050 and HR001119C0072. The views, opinions, and/or findings expressed are those of the authors and should not be interpreted as representing the official views or policies of the Department of Defense or the U.S. Government.

\end{document}